\theoremstyle{remark}
\DeclareMathAlphabet{\mathchanc}{OT1}{pzc}%
                                 {m}{it}
\newcommand{\mcH}{\mathchanc{H}}
\newcommand{\mcm}{\mathchanc{m}}
\newcommand{\mco}{\mathchanc{o}}
\newcommand{\bC}{\mathbb{C}}
\newcommand{\bZ}{\mathbb{Z}}
\newcommand{\scr}{\mathcal}
\newcommand{\sC}{\scr{C}}
\newcommand{\sE}{\scr{E}}
\newcommand{\sF}{\scr{F}}
\newcommand{\sL}{\scr{L}}
\newcommand{\sO}{\scr{O}}
\newcommand{\op}{\overline{p}}
\newcommand{\sHom}[0]{{\mcH\mco\mcm}}
\DeclareMathOperator{\red}{red}
\newcommand{\factor}[2]{\left. \raise 2pt\hbox{\ensuremath{#1}} \right/
        \hskip -2pt\raise -2pt\hbox{\ensuremath{#2}}}
\renewcommand\subsection{
  \renewcommand{\sfdefault}{pag}
  \@startsection{subsection}%
  {2}{0pt}{.8\baselineskip}{.4\baselineskip}{\raggedright
    \sffamily\itshape\small\bfseries
  }}
\renewcommand\section{
  \renewcommand{\sfdefault}{phv}
  \@startsection{section} %
  {1}{0pt}{\baselineskip}{.8\baselineskip}{\centering
    \sffamily
    \scshape
    \bfseries
}}
\title{Semi-negativity of Hodge bundles associated to Du Bois families}
\author{Zsolt Patakfalvi}
\begin{document}

\maketitle

\begin{abstract}
In this note we show that the sheaf $R^1 f_* \sO_X$ is an anti-nef vector bundle (i.e., its dual is nef), where $f : X \to Y$ is a family of Du Bois schemes of pure dimension.
\end{abstract}

\section{Introduction}

In this note we show that the sheaf $R^1 f_* \sO_X$ is an anti-nef vector bundle (i.e., its dual is nef), where $f : X \to Y$ is a family of Du Bois schemes of pure dimension $n$.  Note that by \cite{Kollar_Kovacs_Log_canonical_singularities_are_Du_Bois}, $R^1 f_* \sO_X$ is  known to be a vector bundle, so our contribution is proving anti-nefness. This statement is the generalization of the classical result stating that if $f$ is smooth, then the Hodge metric on $R^1 f_* \sO_X$ has semi-negative curvature \cite{Griffiths_Periods_of_integrals}. For the definition and properties of Du Bois singularities we refer to \cite{Kovacs_Schwede_Hodge_theory_meets_the_minimal_model_program_a_survey_of_log_canonical_and_Du_Bois_singularities}, and here we only note that they can be viewed as the largest class of singularities where vanishing theorems hold 
\cite[9.12, 12.7]{Kollar_Shafarevich_maps_and_automorphic_forms}.

We should also point out that  the semi-negativity of $R^1 f_* \sO_X$ in this case is related but not equivalent to the widely investigated semi-positivity of $R^{n-1} f_* \omega_{X/Y}$ (e.g., \cite[Theorem 1.4]{Fujino_Fujisawa_Variations_of_mixed_Hodge_structures_and_semi_positivity_theorems}). In fact, the semi-negativity of $R^1 f_* \sO_X$ is equivalent in this case to the semi-positivity of $R^{-1} f_* \omega_{X/Y}^{\bullet}$, which sheaf is not equal to $R^{n-1} f_* \omega_{X/Y}$ simply because $\omega_{X/Y}^{\bullet}$ is not isomorphic to $\omega_{X/Y}[n]$. Indeed, $\omega_{X/Y}[n]$ is the $-n$-th cohomology sheaf of $\omega_{X/Y}^{\bullet}$, but in general $\omega_{X/Y}^{\bullet}$ has many other higher cohomology sheaves. (Recall that $\omega_{X/Y}$ is the $-n$-th cohomology sheaf of $\omega_{X/Y}^{\bullet}$, as defined in \autoref{subsec:notation})

\begin{theorem}
\label{thm:semi_negative}
If $f: X \to Y$ is a flat, projective family of connected, Du Bois schemes of pure dimension $n$ over $\bC$,  then   $R^1 f_* \sO_X$ is an anti-nef or equivalently $R^1 f_* \omega_{X/Y}^{\bullet}$ is a nef vector bundle. 
\end{theorem}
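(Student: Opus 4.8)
The plan is to reduce to the case where $Y$ is a smooth projective curve, since nefness of a vector bundle can be checked by restricting to curves mapping to $Y$, and the formation of $R^1 f_* \sO_X$ commutes with base change here (flatness plus the Du Bois hypothesis, via \cite{Kollar_Kovacs_Log_canonical_singularities_are_Du_Bois}, gives local freeness and cohomology-and-base-change). So assume $Y$ is a smooth projective curve and we must show that every quotient line bundle of $R^1 f_* \sO_X$ has nonpositive degree, equivalently that $\mcE := R^1 f_* \sO_X$ is anti-nef on $Y$.

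The key step is to exhibit a geometric source of sections. By the Du Bois condition, there is a natural surjection $\sO_X \to \underline{\Omega}^0_X$ which is in fact an isomorphism, and more importantly the Du Bois complex gives a canonical map from the constant sheaf $\bC_X$ to $\sO_X$ realizing $H^1(X_y, \bC) \twoheadrightarrow H^1(X_y, \sO_{X_y})$ for each fiber; dualizing and using Serre duality on the relative dualizing complex, $R^1 f_* \sO_X$ is Serre-dual to $f_* \omega_{X/Y}^{\bullet}[\text{shift}]$ paired against $R^0$, which recasts anti-nefness of $\mcE$ as nefness of (a subsheaf of) the sheaf $R^1 f_*\omega_{X/Y}^{\bullet}$. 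The substantive input is then a variation-of-mixed-Hodge-structure argument: the local system $R^1 f_* \bC$ on the smooth locus $Y^\circ$ of $f$ carries a VMHS whose lowest Hodge piece, after taking the Deligne canonical extension across the singular fibers, has a metric of semi-negative curvature by the standard Griffiths-type curvature computation; the map $\bC_X \to \sO_X$ ensures that $\mcE$ receives the top graded quotient $F^0/F^1$ of this extended Hodge filtration, and semi-negative curvature of a metrized bundle forces every quotient to have nonpositive degree. Concretely I would invoke the semi-positivity package of Fujino--Fujisawa \cite{Fujino_Fujisawa_Variations_of_mixed_Hodge_structures_and_semi_positivity_theorems} applied to the dual VMHS (or to $R^{2n-1}$ of the dualizing complex), after checking that the Du Bois hypothesis guarantees that $R^1f_*\sO_X$ is precisely the pushforward of the lowest piece of the (extended) Hodge filtration on the relative Du Bois complex $\underline{\Omega}^\bullet_{X/Y}$, so that no correction terms from the singular fibers appear.

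The main obstacle is the last point: controlling the behavior over the \emph{non-smooth} fibers of $f$. Over $Y^\circ$ everything is classical, but one must show that the canonical extension of the Hodge bundle agrees with $R^1 f_* \sO_X$ over all of $Y$ — i.e. that the Du Bois condition prevents the pushforward from ``jumping'' and that the sheaf $R^1 f_* \underline{\Omega}^0_{X/Y}$ is compatibly a sub/quotient of the whole Hodge-filtered $R^1 f_* \underline{\Omega}^\bullet_{X/Y}$. This requires the strictness of the Hodge filtration on the Du Bois complex in families (relative to $f$), degeneration of the relevant Hodge-to-de Rham spectral sequence in the limit, and an analysis of the limiting mixed Hodge structure on the singular fibers showing the weight filtration interacts correctly. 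I expect this to go through using Kovács--Schwede's relative Du Bois formalism \cite{Kovacs_Schwede_Hodge_theory_meets_the_minimal_model_program_a_survey_of_log_canonical_and_Du_Bois_singularities} together with the base-change statement of \cite{Kollar_Kovacs_Log_canonical_singularities_are_Du_Bois}; once it is in place, the curvature/degree inequality is immediate from the cited semi-positivity theorems.
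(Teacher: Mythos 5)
Your reduction to a smooth projective curve base via base change and \cite[Theorem 7.8]{Kollar_Kovacs_Log_canonical_singularities_are_Du_Bois} matches the paper, but the core of your argument --- invoking the Fujino--Fujisawa semi-positivity package for variations of mixed Hodge structure and identifying $R^1 f_* \sO_X$ with the canonical extension of the lowest Hodge piece across the singular fibers --- is precisely the route the paper explicitly declines to take (see \autoref{rem:why_not_Fujino}): the semipositivity theorems of \cite{Fujino_Fujisawa_Variations_of_mixed_Hodge_structures_and_semi_positivity_theorems} and \cite{Kawamata_Semipositivity_theorem_for_reducible_algebraic_fiber_spaces} carry hypotheses on the strata of the degenerate fibers and on the local monodromies that are not known to be arrangeable for an arbitrary flat projective Du Bois family. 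Your proposal concentrates the entire difficulty into the sentences beginning ``after checking that\dots'' and ``I expect this to go through\dots'': you would need to show that the Deligne canonical extension agrees with $R^1 f_* \sO_X$ over every non-smooth fiber, that the relevant spectral sequences degenerate relatively, and that the monodromy/strata assumptions of the cited theorems hold --- none of which is supplied, and the first and last of these are exactly the obstructions the author names. As stated, this is a genuine gap, not a deferrable technicality.

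The paper's actual proof avoids Hodge-theoretic semipositivity machinery entirely. It establishes an injectivity theorem for Du Bois schemes (\autoref{thm:injectivity_Du_Bois}, proved by passing to an embedded log resolution, using Schwede's characterization $\sO_X \cong R\pi_* \sO_E$ and Fujino's injectivity for SNC pairs), deduces that $R^i f_*(\omega_{X/Y}^{\bullet}) \otimes \omega_Y((N+1)y_0)$ is generically globally generated (\autoref{cor:generic_global_generation}), and then amplifies: applying this to the $m$-fold fiber products $X^m_Y$, the K\"unneth formula together with the splitting $Rf_*\omega_{X/Y}^{\bullet} \cong R^{\leq -1} \oplus R^0$ (\autoref{prop:direct_sum_initial}) produces surjections onto $S^m(R^{-1} f_* \omega_{X/Y}^{\bullet}) \otimes \omega_Y((N+1)y_0)$ for all $m$, and generic global generation of all symmetric powers twisted by a fixed line bundle forces nefness (\autoref{lem:generic_global_generation_nef}). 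Also note a smaller inaccuracy in your duality step: Grothendieck duality gives $(R^1 f_* \sO_X)^* \cong R^{-1} f_* \omega_{X/Y}^{\bullet}$ directly (\autoref{lem:locally_free}), so anti-nefness of $R^1 f_* \sO_X$ is equivalent to nefness of $R^{-1} f_* \omega_{X/Y}^{\bullet}$ itself, not of a subsheaf of some other pushforward.
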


\begin{remark}
\label{rem:why_not_Fujino}
One would be tempted to use directly the available semipositivity results for reducible fiber spaces \cite{Fujino_Fujisawa_Variations_of_mixed_Hodge_structures_and_semi_positivity_theorems}, \cite{Kawamata_Semipositivity_theorem_for_reducible_algebraic_fiber_spaces} to prove \autoref{thm:semi_negative}. However, the author does not see a way of doing it, due to certain assumptions on the strata and monodromies in \cite{Fujino_Fujisawa_Variations_of_mixed_Hodge_structures_and_semi_positivity_theorems} and \cite{Kawamata_Semipositivity_theorem_for_reducible_algebraic_fiber_spaces}. Instead, we use an injectivity theorem for Du Bois schemes.
\end{remark}

The  main ingredients in proving \autoref{thm:semi_negative} are \autoref{thm:injectivity_Du_Bois} and \autoref{cor:generic_global_generation}, which are shown in \autoref{subsec:injectivity_surjectivity_Du_Bois}. Note that \autoref{thm:injectivity_Du_Bois} was shown in \cite[Thm 9.12]{Kollar_Shafarevich_maps_and_automorphic_forms} for normal schemes. Though we believe the arguments of \cite[Thm 12.10]{Kollar_Shafarevich_maps_and_automorphic_forms} can be generalized to non-normal schemes, for the convenience of the reader we include a different proof here.

\begin{theorem}
\label{thm:injectivity_Du_Bois}
If $X$ is a projective, Du Bois scheme, $N>0$ an integer, $\sL$ a line bundle on $X$, such that $\sL^N$ is globally generated and $F$ a general effective divisor of $\sL^N$, then the natrual map
\begin{equation}
H^i(X,\omega_X^{\bullet} \otimes \sL ) \to H^i(X,\omega_X^{\bullet} \otimes \sL(F) ) 
\end{equation}
is injective.
\end{theorem}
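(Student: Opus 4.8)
The plan is to reduce, via a cyclic covering and Grothendieck duality, to an injectivity statement on a Du Bois pair, and from there to the classical Hodge theory of log pairs. Fix a section $s\in H^0(X,\sL^N)$ with $\opdiv(s)=F$, and, using that $\sL^N$ is globally generated, also a \emph{general auxiliary} section $s'\in H^0(X,\sL^N)$ with $G:=\opdiv(s')$; let $\pi:Y\to X$ be the degree-$N$ cyclic cover branched along $G$, that is, $Y=\Spec_X\bigl(\bigoplus_{j=0}^{N-1}\sL^{-j}\bigr)$ with algebra structure given by $s'$. Then $\pi$ is finite flat of degree $N$, $\pi_*\sO_Y=\bigoplus_{j=0}^{N-1}\sL^{-j}$ with the evident $\bZ/N$-action (the $j$-th summand being the $j$-th character space), and $\pi$ is étale over $X\smallsetminus G$. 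For $s'$ general one needs: $G$ and $Y$ are again Du Bois of pure dimension; and — this is the reason for taking $G\neq F$ — that $\pi^{*}F=\pi^{-1}(F)$ is a \emph{reduced} Du Bois divisor on $Y$, namely the degree-$N$ cyclic cover of $F$ branched along the reduced divisor $F\cap G$, so that $(Y,\pi^{*}F)$ is a Du Bois pair. These are Bertini-type statements for Du Bois singularities and pairs, applied to the basepoint-free systems $|\sL^N|$ on $X$ and its pullback to $Y$, and establishing them is the first ingredient.

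Grothendieck duality for the finite flat morphism $\pi$ gives $R\pi_*\omega_Y^{\bullet}=R\sHom_{\sO_X}(\pi_*\sO_Y,\omega_X^{\bullet})=\bigoplus_{j=0}^{N-1}\omega_X^{\bullet}\otimes\sL^{j}$, with no higher $\sExt$ terms since $\pi_*\sO_Y$ is locally free; hence $H^i(X,\omega_X^{\bullet}\otimes\sL^{l})$ is the $l$-th character space of $H^i(Y,\omega_Y^{\bullet})$, and, after tensoring with $\sO_Y(\pi^{*}F)=\pi^{*}\sL^{N}$, likewise $H^i(X,\omega_X^{\bullet}\otimes\sL^{l+N})$ is the $l$-th character space of $H^i(Y,\omega_Y^{\bullet}(\pi^{*}F))$. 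The natural map $H^i(Y,\omega_Y^{\bullet})\to H^i(Y,\omega_Y^{\bullet}(\pi^{*}F))$, i.e.\ multiplication by $\pi^{*}s$, is $\bZ/N$-equivariant, hence respects the character decomposition, and on the $l$-th piece it is multiplication by $s$ from $H^i(X,\omega_X^{\bullet}\otimes\sL^{l})$ to $H^i(X,\omega_X^{\bullet}\otimes\sL^{l+N})$; for $l=1$ this is precisely the map of the theorem. Thus it suffices to show that $H^i(Y,\omega_Y^{\bullet})\to H^i(Y,\omega_Y^{\bullet}(\pi^{*}F))$ is injective on the $l=1$ character space — in fact on every nontrivial character space, which is what the Hodge-theoretic step below yields (it genuinely fails on the trivial one).

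It remains to prove that injectivity, a statement about the Du Bois pair $(Y,\pi^{*}F)$ together with its $\bZ/N$-action. By Serre–Grothendieck duality on $Y$ it is equivalent to the vanishing, on nontrivial character spaces, of the restriction $H^j(Y,\sO_Y)\to H^j(\pi^{*}F,\sO_{\pi^{*}F})$. Since $Y$ and $\pi^{*}F$ are Du Bois and $(Y,\pi^{*}F)$ is a Du Bois pair, these coherent cohomology groups are the degree-zero graded pieces, for the Hodge filtration, of the mixed Hodge structures on $H^{*}(Y,\bC)$ and $H^{*}(\pi^{*}F,\bC)$, compatibly with restriction; so the map in question is $\mathrm{gr}^{0}$ of a morphism of mixed Hodge structures and is therefore controlled by strictness. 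On a nontrivial character space one is reduced to checking that the image of $H^j(Y,\bC)$ in $H^j(\pi^{*}F,\bC)$ lies in $F^{1}$ of the Hodge filtration; unwinding the Du Bois complexes through a simplicial hyperresolution of $(Y,\pi^{*}F)$, all of whose terms are smooth with the divisor becoming simple normal crossings, this is precisely the classical Kollár–Esnault–Viehweg injectivity theorem, applied on each term and assembled $\bZ/N$-equivariantly. The hard part is exactly this last reduction: carrying the smooth log case through a hyperresolution while keeping the group action and the Du Bois pair structure under control, together with the preliminary Bertini statements; in the normal case this is essentially what is done in \cite[Thm 12.10]{Kollar_Shafarevich_maps_and_automorphic_forms}.
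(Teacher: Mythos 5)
Your route --- a cyclic cover $\pi:Y\to X$ branched along an auxiliary general $G\in|\sL^N|$, the character decomposition $\pi_*\omega_Y^{\bullet}\cong\bigoplus_{j}\omega_X^{\bullet}\otimes\sL^{j}$ via duality for the finite flat $\pi$, identification of the theorem's map with the $l=1$ character piece of multiplication by $\pi^{*}s$, and dualization to the vanishing of $H^j(Y,\sO_Y)\to H^j(\pi^{*}F,\sO_{\pi^{*}F})$ on nontrivial character spaces --- is genuinely different from the paper's, and that formal skeleton is sound. But the two steps you defer are exactly where the content lives, and as written they are gaps. First, the Bertini-type inputs: that the cyclic cover of an arbitrary (possibly non-normal, non-Cohen--Macaulay) Du Bois scheme branched along a general member of a base-point-free system is again Du Bois, and that $(Y,\pi^{*}F)$ is a Du Bois pair, are not off-the-shelf facts; $Y$ is the Cartier divisor $\{t^N=s'\}$ in the total space of $\sL$, and a Cartier divisor in a Du Bois ambient space need not be Du Bois, so this requires a real argument or a precise citation. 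Second, and more seriously, the final step (``apply the classical injectivity theorem on each term of a hyperresolution and assemble equivariantly'') is essentially the whole theorem. A map of descent spectral sequences that vanishes on every $E_1$-term only forces the map on abutments to shift the induced filtration by one, not to vanish, and injectivity term-by-term does not pass to hypercohomology; moreover the individual terms $Y_p$ of an equivariant hyperresolution are not themselves cyclic covers of smooth varieties branched along snc divisors, so the classical Esnault--Viehweg/Koll\'ar statement does not literally apply to them. Making all of this work needs the full strictness package for the Hodge filtration on the simplicial variety, and you concede the point by calling this reduction ``the hard part.''

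For comparison, the paper short-circuits both difficulties. By Schwede's characterization of Du Bois singularities, $\sO_X\to R\rho_*\sO_E$ is a quasi-isomorphism for $E$ the reduced preimage of $X$ in an embedded log resolution inside a smooth ambient space; Grothendieck duality then gives $\omega_X^{\bullet}\cong R\rho_*\omega_E^{\bullet}$, so the statement becomes an injectivity for $\omega_E\otimes\rho^{*}\sL\to\omega_E\otimes\rho^{*}\sL(F)$ on the single snc scheme $E$, where $\rho^{*}F$ is general and hence contains no stratum of $E$. That is exactly Fujino's injectivity theorem for reducible snc pairs, which already packages the cyclic-cover-plus-mixed-Hodge-structure argument you are proposing to redo by hand. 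If you want to salvage your approach, the efficient fix is to run your construction not on the singular $X$ but on such an snc model, where both of your deferred ingredients become available in the literature.
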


For the next statement, recall that a sheaf $\sF$ on an integral scheme $Y$ is generically globally generated, if global section of $\sF$ generate the stalk of $\sF$ at the generic point of $Y$.

\begin{corollary}
\label{cor:generic_global_generation}
Let $f : X \to Y$ be a flat, projective Du Bois family over a smooth projective curve, $y_0 \in Y$ and $N>0$ such that $|NX_{y_0}|$ is base-point free. Then for any $i$, $R^i f_*(\omega_{X/Y}^{\bullet}) \otimes \omega_Y ((N+1)y_0 )$ is generically globally generated.
\end{corollary}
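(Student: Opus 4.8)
The plan is to run the standard Castelnuovo--Mumford style argument on the curve $Y$, using \autoref{thm:injectivity_Du_Bois} as the substitute for Kodaira-type vanishing that makes the bootstrap work. First I would set up notation: let $\sL$ be a line bundle on $X$ computing $|NX_{y_0}|$, i.e. $\sL = \sO_X(NX_{y_0})$, which by hypothesis is base-point free and hence globally generated; more importantly $\sL \cong f^* \sO_Y(Ny_0)$, so projection formula gives $R^i f_* (\omega_{X/Y}^{\bullet} \otimes \sL) \cong R^i f_*(\omega_{X/Y}^{\bullet}) \otimes \sO_Y(Ny_0)$. The goal is to show $\sG := R^i f_*(\omega_{X/Y}^{\bullet}) \otimes \omega_Y((N+1)y_0)$ is generically globally generated, i.e. $H^0(Y,\sG) \to \sG_\eta$ is surjective at the generic point $\eta$.

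The key step is to produce a surjection onto the generic stalk from the Leray spectral sequence for $f$ applied to $\omega_X^{\bullet} \otimes \sL'$ for a suitable $\sL'$ on $X$, after twisting down by a general fiber. Concretely, pick a general point $y_1 \in Y$ (so $X_{y_1}$ is a general member of the base-point-free system $|X_{y_0}|$, translated) and consider the short exact sequence $0 \to \sO_X \to \sO_X(X_{y_1}) \to \sO_{X_{y_1}}(X_{y_1}) \to 0$; tensoring with $\omega_X^{\bullet} \otimes f^*(\text{line bundle of degree enough})$ and pushing forward, one reduces generic global generation of $R^i f_* \omega_{X/Y}^{\bullet}$ twisted appropriately to a statement on $X$ that \autoref{thm:injectivity_Du_Bois} controls: the connecting/restriction maps in cohomology are injective because $\omega_X^{\bullet}$ restricted along general members of a free linear system behaves well (here is where one uses that a general $F \in |\sL^N|$ gives an injection $H^i(\omega_X^\bullet \otimes \sL)\hookrightarrow H^i(\omega_X^\bullet \otimes \sL(F))$, and dually/via the filtration this forces the needed surjectivity of $H^0(Y, R^i f_*(\cdots)\otimes(\text{ample enough})) \to$ fiber). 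Then one unwinds the twists: the base-point-freeness of $|NX_{y_0}|$ is exactly what lets the needed positivity on $X$ come from pulling back $\sO_Y(Ny_0)$, and the extra $\omega_Y((N+1)y_0) \cong \omega_Y(Ny_0)\otimes\sO_Y(y_0)$ provides the one extra twist ($+y_0$, or equivalently "$+1$" point on the curve) that Castelnuovo--Mumford regularity on a curve requires to pass from "$H^1$ vanishes" to "globally generated" — and here, since we only want generic global generation, it suffices to have these vanishings/surjections after localizing at $\eta$, which is automatic once the relevant $H^1$ on $Y$ vanishes or once the evaluation map is generically surjective.

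More carefully, the cleanest route is: by \autoref{thm:injectivity_Du_Bois} applied on $X$ with $\sL_X := \sO_X(X_{y_0})$ (note $\sL_X^N = \sO_X(NX_{y_0})$ is globally generated by hypothesis) and $F$ a general member of $|\sL_X^N|$, we get $H^i(X, \omega_X^\bullet \otimes \sL_X) \hookrightarrow H^i(X, \omega_X^\bullet \otimes \sL_X(F))$. Choosing $F$ to be a sum of $N$ general fibers over points $y_1,\dots,y_N$, the target's Leray spectral sequence and the base change theorem (valid since $R^\bullet f_* \omega_{X/Y}^\bullet$ is locally free by \cite{Kollar_Kovacs_Log_canonical_singularities_are_Du_Bois}, and degenerates — $Y$ is a curve) let one read off that $H^0\big(Y, R^i f_*(\omega_{X/Y}^\bullet)\otimes \omega_Y((N+1)y_0)\big)$ surjects onto each fiber $R^i f_*(\omega_{X/Y}^\bullet)\otimes k(y_j)$ at the general points $y_j$; surjectivity at a general closed point is exactly generic global generation. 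The twist count works out because $\omega_{X/Y}^\bullet = \omega_X^\bullet \otimes f^*\omega_Y^{-1}[\dim Y]$ up to the shift, contributing the $\omega_Y$ factor, while $\sL_X(F)$ contributes $\sO_Y(Ny_0)$ after projection formula and one more point of slack is needed to make the evaluation map at a point surjective on a curve, giving $(N+1)y_0$.

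The main obstacle I expect is the bookkeeping with $\omega_{X/Y}^\bullet$ versus $\omega_X^\bullet$ and the base-change/degeneration step: one must justify that $R^i f_* \omega_{X/Y}^\bullet$ commutes with base change and that the Leray spectral sequence degenerates enough to identify $H^i(X,\omega_X^\bullet \otimes \sL_X(F))$ with the sections over $Y$ of the pushforward twisted by a positive line bundle — over a curve this is standard once local freeness is known, but writing it out for the full complex $\omega_{X/Y}^\bullet$ (not just $\omega_{X/Y}$) requires care. The other delicate point is ensuring "general $F$" in \autoref{thm:injectivity_Du_Bois} can be taken to be a union of general fibers; this follows because $|NX_{y_0}| = f^*|Ny_0|$ pulled back, so a general member is literally $\sum_{j} X_{y_j}$ for general $y_j \in Y$, and generality of the $y_j$ is inherited from generality in $|Ny_0|$ on the curve.
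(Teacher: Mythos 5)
Your plan follows essentially the same route as the paper: apply \autoref{thm:injectivity_Du_Bois} to the total space $X$ with $F$ a general member of $|NX_{y_0}|$ (which, for a connected family, is $f^*$ of a general member of $|Ny_0|$, i.e.\ a sum of $N$ general fibres), feed the resulting injectivity into the adjunction triangle of \autoref{lem:adjunction} to obtain surjectivity of $H^i(X,\omega_X^{\bullet}(X_{y_0}+F)[-1])\to H^i(X_{y_j},\omega_{X_{y_j}}^{\bullet})$, rewrite the source as $H^i(X,\omega_{X/Y}^{\bullet}\otimes f^*\omega_Y((N+1)y_0))$ via \autoref{lem:dualizing_complex}, and conclude through the Leray edge map. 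This is precisely \autoref{lem:surjective_DuBois} together with the paper's concluding diagram.

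Two points need attention. First, a genuine (though easily filled) gap: \autoref{thm:injectivity_Du_Bois} requires the scheme it is applied to --- here the total space $X$ --- to be Du Bois, while the hypothesis only says the \emph{fibres} are Du Bois. This is not formal; the paper invokes the deformation theorem of Kov\'acs--Schwede (Du Bois singularities deform) to conclude that $X$ is Du Bois, and your proposal never addresses it. Second, the step you describe as ``read off from the Leray spectral sequence and base change'' is where the paper does concrete work: there is no direct base-change theorem for the full complex $\omega_{X/Y}^{\bullet}$, so the identification of the fibre of $R^i f_*\omega_{X/Y}^{\bullet}$ at $y$ with $H^i(X_y,\omega_{X_y}^{\bullet})$ is obtained by a dimension count, $\dim R^i f_*\omega_{X/Y}^{\bullet}\otimes k(y)=\dim H^{-i}(X_y,\sO_{X_y})=\dim H^i(X_y,\omega_{X_y}^{\bullet})$, using local freeness and base change for $R^{-i}f_*\sO_X$ plus duality; one then argues that a surjection from $H^0(Y,\cdot)$ onto a space of the same dimension as the fibre, factoring through the fibre, forces the evaluation map to be surjective. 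With these two points supplied, your argument closes up into the paper's proof.
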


% Second, in \autoref{subsec:direct_decomposition_proof}, we show the following decomposition result, in the spirit of the celebrated article of Koll\'ar \cite{Kollar_Higher_direct_images_of_dualizing_sheaves_II}.
% 
% \begin{theorem}
% \label{thm:direct_sum}
% Let  $n \geq d \geq 2$ be arbitrary integers and $f : X \to Y$ a flat projective morphism with connected fibers, such that $X$ is a reduced scheme of pure dimension $n$  and $Y$ a smooth curve. Furthermore, assume that $X$ is $S_d$. Then  
% \begin{equation}
% \label{eq:induction_dir_sum:X2}
% R f_* \omega_X^{\bullet} \cong R^{\leq -d}f_* (\omega_X^{\bullet} ) \oplus \left( \bigoplus_{i>-d} R^{i} f_* \omega_X^{\bullet} [-i] \right) .
% \end{equation}
% \end{theorem}

\subsection{Notation}
\label{subsec:notation}

The base field is the field of complex numbers $\bC$. For a complex $\sC^{\bullet}$ of sheaves, $h^i(\sC^{\bullet})$ is the $i$-th cohomology sheaf of $\sC$. For a projective morphism $f : X \to Y$ such that $Y$ is of finite type over $\bC$, $\omega_{X/Y}^{\bullet}:= f^! \sO_Y$, where $f^!$ is the functor obtained in \cite[Corollary VII.3.4.a]{Hartshorne_Residues_and_duality}. If $f$ has equidimensional fibers of dimension $n$, then $\omega_{X/Y} := h^{-n} (\omega_{X/Y}^{\bullet})$. Every complex and morphism of complexes is considered in the derived category $D(qc/\_)$ of quasi-coherent sheaves up to the equivalences defined there.

\subsection{Acknowledgement}

The author is grateful to Bhargav Bhatt, J\'anos Koll\'ar and Karl Schwede for fruitful discussions about the article.

\section{The proof of semi-positivity}
\label{subsec:semi_positivity_proof}

Since nefness is checked on curves, proving \autoref{thm:semi_negative} for a  curve base turns out to be the main issue.
This is proved in \autoref{prop:nef}, assuming \autoref{cor:generic_global_generation}, which will be showed in \autoref{subsec:injectivity_surjectivity_Du_Bois}.  We conclude this section with the (short) proof of \autoref{thm:semi_negative} using \autoref{prop:nef}.

\begin{proposition}
\label{prop:nef}
If $f: X \to Y$ is a flat, projective family of connected, Du Bois  schemes of pure dimension $n$ over a smooth, projective curve,  then $R^{1} f_* \sO_X$ is an anti-nef or equivalently $R^{-1} f_* \omega_{X/Y}^{\bullet}$ is a nef vector bundle.
\end{proposition}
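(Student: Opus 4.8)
The plan is to reduce nefness of the dual to a statement about generic global generation after twisting, and then run the classical degeneration/base‐change argument adapted to the Du Bois setting. Concretely, let $\sF = R^{-1}f_*\omega_{X/Y}^{\bullet}$ (which equals $(R^1 f_*\sO_X)^\vee$ up to a twist by $\omega_Y$ via Grothendieck--Serre duality for $f$, since $\omega_{X/Y}^{\bullet} \cong \sRHom_X(\sO_X, \omega_{X/Y}^{\bullet})$ and relative duality gives $R^{-1}f_*\omega_{X/Y}^{\bullet} \cong (R^1 f_*\sO_X)^\vee$). To prove $\sF$ is nef it suffices, by a standard criterion on a smooth projective curve $Y$, to show that for every finite base change $\pi : Y' \to Y$ and every ample (or just positive‐degree) line bundle $A$ on $Y'$, the pullback $\pi^*\sF \otimes A$ has a nonzero section, or better, is generically globally generated; equivalently, it is enough to rule out a quotient line bundle of $\sF$ of negative degree. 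I would phrase the target as: for every $y_0 \in Y$ the sheaf $\sF \otimes \omega_Y((N+1)y_0)$ is generically globally generated for suitable $N$ — which is exactly \autoref{cor:generic_global_generation}.

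First I would check that \autoref{cor:generic_global_generation} is stable under the base changes needed: given $\pi : Y' \to Y$ a finite morphism from another smooth projective curve, the fibre product $X' = X\times_Y Y'$ is again a flat projective Du Bois family of pure dimension $n$ (Du Bois is preserved since the fibres are unchanged and flatness is preserved; purity of dimension is preserved), and $\omega_{X'/Y'}^{\bullet}$ is the pullback of $\omega_{X/Y}^{\bullet}$, so $R^{-1}f'_*\omega_{X'/Y'}^{\bullet} = \pi^*\sF$ by flat base change in the derived category. Then for a point $y_0' \in Y'$ over $y_0$, apply \autoref{cor:generic_global_generation} to $f'$: $\pi^*\sF \otimes \omega_{Y'}((N+1)y_0')$ is generically globally generated once $|N X'_{y_0'}|$ is base‐point free, which holds for $N$ large and divisible depending only on a fixed very ample bundle on $X/Y$ (here one needs that basepoint‐freeness of $|NX_{y_0}|$ can be arranged uniformly, e.g. by picking $N$ so that $\sO_X(NX_{y_0})$ is relatively globally generated and globally generated after a further twist — this is where I would be slightly careful).

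Next, the numerical step: suppose $\sF$ is not nef. Then there is a quotient line bundle $\sF \twoheadrightarrow Q$ with $\deg Q = -d < 0$. Pass to a finite cover $\pi : Y' \to Y$ of degree $m$ ramified only away from $y_0$ with $m$ divisible and large; then $\pi^*\sF \twoheadrightarrow \pi^* Q$ has $\deg \pi^* Q = -md$. Twisting by $\omega_{Y'}((N+1)y_0')$ contributes degree $(2g'-2) + (N+1)$ on the nose to the quotient, where $g'$ is the genus of $Y'$. The point is to arrange — using that we may also pull back by covers totally ramified over $y_0$ so that $y_0'$ is a single point with large "weight," or simply choosing the cover so that $2g'-2$ grows slower than $md$ — that $\deg\bigl(\pi^*Q \otimes \omega_{Y'}((N+1)y_0')\bigr) < 0$, contradicting the fact that a generically globally generated sheaf cannot surject onto a negative‐degree line bundle on a smooth projective curve. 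Getting the inequality on degrees to work out (controlling the genus of the cover against the ramification, or equivalently choosing the cover totally ramified over $y_0$ to make $(N+1)$‐times‐$y_0'$ cheap relative to $md$) is the main technical obstacle; the classical way around it is to take $\pi$ étale away from $y_0$ and totally ramified at $y_0$ of degree $m$, so $y_0' $ is one point, $2g'-2 = m(2g-2) + (m-1)$ by Riemann--Hurwitz, and then $\deg(\pi^*Q) + (2g'-2) + (N+1) = -md + m(2g-2) + (m-1) + (N+1)$, which is negative for $m \gg 0$ since the coefficient of $m$ is $(2g-2) - d$ and $d \ge 1$ forces... — here I must in fact also push the $(2g-2)$ term down, which is why one instead applies the argument to the \emph{symmetric/exterior power or tensor power} $\sF^{\otimes k}$, or uses the standard trick of twisting $Q$ by a fixed ample before taking covers; I would follow whichever of these Koll\'ar's degeneration argument uses, since the structure is identical to the classical semipositivity proof and only the input (generic global generation via the Du Bois injectivity theorem \autoref{thm:injectivity_Du_Bois}) is new.

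Finally, having derived a contradiction, conclude that $\sF = R^{-1}f_*\omega_{X/Y}^{\bullet}$ is nef, and translate back: by relative Grothendieck duality $R^1 f_*\sO_X \cong \sF^\vee$ up to the $\omega_Y$ twist that disappears in the anti‐nefness statement, so $R^1 f_*\sO_X$ is anti‐nef. I expect the genuinely new content to be entirely concentrated in \autoref{cor:generic_global_generation} (hence in \autoref{thm:injectivity_Du_Bois}), and the rest of this proposition to be the well‐known covering‐trick packaging; the one place to be vigilant is ensuring the basepoint‐freeness hypothesis $|NX_{y_0}|$ survives the base changes with a uniform $N$.
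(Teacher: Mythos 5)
Your overall strategy---reduce nefness to generic global generation after a fixed twist, then amplify---has the right shape, but the attempt stalls exactly at the step you yourself flag as uncertain, and that step is the actual content of the paper's proof. Generic global generation of $\sF \otimes \omega_Y((N+1)y_0)$ for $\sF = R^{-1}f_*\omega_{X/Y}^{\bullet}$ alone does not imply nefness, and your covering-trick bookkeeping does not close (as you notice, the $2g'-2$ term defeats the estimate). What is needed is generic global generation of $S^m(\sF)\otimes \omega_Y((N+1)y_0)$ for \emph{every} $m>0$ with the \emph{same} twist; then the elementary \autoref{lem:generic_global_generation_nef} (pull back to a cover, take a quotient line bundle $\sE$, obtain $m\deg\sE + \deg\tau^*\sL \ge 0$ for all $m$, hence $\deg\sE\ge 0$) finishes. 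You never supply a mechanism for controlling the symmetric powers; ``whichever trick Koll\'ar uses'' is carrying all the weight at that point.

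The paper's mechanism is the $m$-fold fibre product $f^m : X^m_Y \to Y$ over the \emph{same} curve: this is again a flat projective Du Bois family, so \autoref{cor:generic_global_generation} applies to it directly with the twist $\omega_Y((N+1)y_0)$ independent of $m$---no base change of $Y$ is ever performed, so none of your worries about uniform basepoint-freeness of $|NX_{y_0}|$ or genus growth arise (the finite covers appear only inside the purely numerical \autoref{lem:generic_global_generation_nef}, where generic global generation simply pulls back). One then produces a surjection $R^{-m}(f^m)_*\omega^\bullet_{X^m/Y} \twoheadrightarrow S^m\bigl(R^{-1}f_*\omega^\bullet_{X/Y}\bigr)$ by combining (i) $\omega^\bullet_{X^m/Y}\cong\bigotimes_L {}_{i=1}^m Lp_i^*\omega^\bullet_{X/Y}$ (\autoref{lem:product_dualizing_complex}), (ii) the K\"unneth formula, and (iii) the splitting $Rf_*\omega^\bullet_{X/Y}\cong R^{\le -1}f_*\omega^\bullet_{X/Y}\oplus R^0f_*\omega^\bullet_{X/Y}$ of \autoref{prop:direct_sum_initial}, which rests on Bhatt's splitting of $\sO_Y\to Rf_*\sO_X$ for a base with rational singularities and is what lets one project away the degree-zero pieces and identify the $(-m)$-th cohomology sheaf of the derived tensor product with $\bigotimes^m R^{-1}f_*\omega^\bullet_{X/Y}$. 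This fibre-product construction is the missing idea in your proposal. (A minor further correction: \autoref{lem:locally_free} gives $R^{-1}f_*\omega^\bullet_{X/Y}\cong (R^1f_*\sO_X)^*$ on the nose, with no $\omega_Y$ twist.)
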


We will prove \autoref{prop:nef} at the end of this section, after listing a few lemmas.

\begin{lemma}
\label{lem:locally_free}
If $f: X \to Y$ is a flat, projective family with Du Bois fibers, then  $(R^i f_* \sO_X)^* \cong R^{-i} f_* \omega_{X/Y}^{\bullet}$.
\end{lemma}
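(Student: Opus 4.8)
The statement is: for $f:X\to Y$ flat projective with Du Bois fibers, $(R^i f_*\sO_X)^*\cong R^{-i}f_*\omega_{X/Y}^\bullet$. The plan is to deduce this from Grothendieck–Serre duality in the relative setting, together with the Kollár–Kovács result that each $R^i f_*\sO_X$ is locally free (which makes dualizing harmless).

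First I would invoke relative Grothendieck duality for the projective morphism $f$: for any $\sF^\bullet\in D^b_{qc}(X)$ there is a natural isomorphism $Rf_*\,R\!\sHom_X(\sF^\bullet,\omega_{X/Y}^\bullet)\cong R\!\sHom_Y(Rf_*\sF^\bullet,\sO_Y)$, where $\omega_{X/Y}^\bullet=f^!\sO_Y$ as fixed in \autoref{subsec:notation}. Apply this with $\sF^\bullet=\sO_X$, giving $Rf_*\,\omega_{X/Y}^\bullet\cong R\!\sHom_Y(Rf_*\sO_X,\sO_Y)$. The second step is to pass to cohomology sheaves of both sides. On the left, $h^{-i}$ of $Rf_*\omega_{X/Y}^\bullet$ is by definition $R^{-i}f_*\omega_{X/Y}^\bullet$. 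On the right I need to compute $h^{-i}$ of $R\!\sHom_Y(Rf_*\sO_X,\sO_Y)$; this is where local freeness enters. Since by \cite{Kollar_Kovacs_Log_canonical_singularities_are_Du_Bois} each $R^j f_*\sO_X$ is locally free, the complex $Rf_*\sO_X$ is, locally on $Y$, quasi-isomorphic to a bounded complex of finite free modules with zero differentials — i.e. it splits as $\bigoplus_j R^j f_*\sO_X[-j]$ — and hence $R\!\sHom_Y(Rf_*\sO_X,\sO_Y)\cong\bigoplus_j (R^j f_*\sO_X)^*[j]$. Taking $h^{-i}$ picks out exactly the summand $j=i$, yielding $(R^i f_*\sO_X)^*$. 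Comparing the two computations gives the claimed isomorphism, and naturality of the duality isomorphism makes it canonical.

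The one point needing care — and the main (minor) obstacle — is the splitting/formality argument: a priori $Rf_*\sO_X$ is only a bounded complex with locally free cohomology, which need not be formal without further input. I would handle this either (a) by noting it suffices to compute $h^{-i}$ after localizing at each point of $Y$ and using that over a local ring a bounded complex with free cohomology in which, crucially, we only need the $(-i)$-th cohomology of its dual — so a spectral-sequence or stupid-truncation argument suffices even without full formality; or (b) more cleanly, by observing that $Y$ is (for the application) a smooth curve or more generally we may reduce to the case where the relevant $\Ext$-sheaves $\sExt^1_Y(R^j f_*\sO_X,\sO_Y)$ vanish because the $R^j f_*\sO_X$ are locally free, which forces the $\sHom$-spectral sequence for $R\!\sHom_Y(Rf_*\sO_X,\sO_Y)$ to degenerate and gives $h^{-i}R\!\sHom_Y(Rf_*\sO_X,\sO_Y)\cong \sHom_Y(R^i f_*\sO_X,\sO_Y)=(R^i f_*\sO_X)^*$ directly. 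Route (b) is the one I would write up, since the vanishing of $\sExt^{>0}$ against a locally free sheaf is immediate and makes the degeneration transparent.
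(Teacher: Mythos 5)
Your proposal is correct and is essentially identical to the paper's proof: apply relative Grothendieck duality to $\sO_X$ and then use the local freeness of the sheaves $R^j f_* \sO_X$ (from Koll\'ar--Kov\'acs) to degenerate the $\sExt$-spectral sequence computing $h^{-i}R\sHom_Y(Rf_*\sO_X,\sO_Y)$, which is your route (b). No substantive differences to report.
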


\begin{proof}
By \cite[Theorem 7.8]{Kollar_Kovacs_Log_canonical_singularities_are_Du_Bois}, $R^i f_* \sO_X$ is locally free. Hence the following computation concludes our proof.
\begin{equation*}
R^{-i} f_* \omega_{X/Y}^{\bullet} \cong R^{-i} f_* R \sHom_X(\sO_X, \omega_{X/Y}^{\bullet}) 
\cong  \underbrace{R^{-i} \sHom_Y(R f_* \sO_X, \sO_Y)}_{\textrm{Grothendieck duality}} \cong 
\underbrace{(R^i f_* \sO_X)^*}_{\parbox{125pt}{ \tiny $R^i f_* \sO_X$ is locally free, hence the adequate spectral squence degenerates}}
\end{equation*}
 
\end{proof}

Since $\omega_{X/Y}^{\bullet}$ is the main object of \autoref{prop:nef} for fibrations $X \to Y$ that are not necessarily Cohen-Macaulay, we need the following well-known technical lemma. For a proof we refer to either \cite[Theorem 5.4]{Neeman_The_Grothendieck_duality_theorem} or \cite[III, Prop 8.8]{Hartshorne_Residues_and_duality}. The most important consequence is stated in \autoref{lem:dualizing_complex}, a formula relating the relative and absolute dualizing complexes. It turns out that, at least over Gorenstein bases, nothing surprising happens. 

\begin{lemma}
\label{lem:f_upper_shriek}
If $f : X \to Y$ is a flat, projective morphism between projective schemes, then for every $\sC^{\bullet} \in D(X)$,
\begin{equation*}
f^!(\sC^{\bullet}) \cong Lf^* (\sC^{\bullet}) \displaystyle\otimes_L f^{!} \sO_Y. 
\end{equation*}
\end{lemma}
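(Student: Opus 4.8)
The identity is the projection formula for the twisted inverse image, and the plan is to reduce it to a single check on a compact generator. The structural inputs I would use are: since $f$ is flat, $Lf^* = f^*$ is exact; since $f$ is projective over a Noetherian base it is proper and perfect, so $f^!$ is the right adjoint of $Rf_* : D_{qc}(X) \to D_{qc}(Y)$ and, because $Rf_*$ carries perfect complexes to perfect complexes for such $f$, its right adjoint $f^!$ commutes with arbitrary direct sums. Hence both $P := Lf^*(-) \otimes^L_{\sO_X} f^!\sO_Y$ and $f^!(-)$, viewed as functors $D_{qc}(Y) \to D_{qc}(X)$, are triangulated and preserve coproducts. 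They, together with the trace map used below, are also compatible with flat base change on $Y$, so the assertion is local on $Y$ and I may assume $Y = \Spec A$ is affine (with $f$ still proper and perfect).

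The comparison map is built as usual from the trace. Applying $Rf_*$ to $P(\sC^\bullet)$, the projection formula gives $Rf_* P(\sC^\bullet) \cong \sC^\bullet \otimes^L_{\sO_Y} Rf_* f^!\sO_Y$, and composing with $\id \otimes \Tr_f$, where $\Tr_f : Rf_* f^!\sO_Y \to \sO_Y$ is the trace (the counit of $Rf_* \dashv f^!$), yields a natural map $Rf_* P(\sC^\bullet) \to \sC^\bullet$; its adjoint is a natural transformation $\eta_{\sC^\bullet} : P(\sC^\bullet) \to f^!(\sC^\bullet)$. For $\sC^\bullet = \sO_Y$ the projection-formula isomorphism is the identity and $\Tr_f$ is by definition the counit at $\sO_Y$, so unwinding the adjunction shows that $\eta_{\sO_Y}$ is the canonical isomorphism $\sO_X \otimes^L_{\sO_X} f^!\sO_Y \xrightarrow{\sim} f^!\sO_Y$.

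To conclude, consider the full subcategory $\sT \subseteq D_{qc}(Y)$ of objects $\sC^\bullet$ for which $\eta_{\sC^\bullet}$ is an isomorphism. By the first paragraph it is a localizing subcategory, and by the second it contains $\sO_Y$. Since $Y = \Spec A$, the sheaf $\sO_Y$ is a compact generator of $D_{qc}(Y) = D(A)$, so the smallest localizing subcategory containing it is all of $D_{qc}(Y)$; therefore $\eta_{\sC^\bullet}$ is an isomorphism for every $\sC^\bullet$, which is the claim.

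The main obstacle is not a computation but marshalling the correct formal facts — above all that a flat projective morphism is perfect, so that $f^!$ commutes with coproducts (this is exactly what licenses the ``one generator suffices'' step) and satisfies flat base change (used to pass to an affine base); these are part of the Neeman--Lipman treatment of Grothendieck duality. The only genuine verification left is that $\eta_{\sO_Y}$ is the identity, a diagram chase in the six-functor formalism. One could instead follow Hartshorne and factor $f = p \circ i$ with $i : X \hookrightarrow \bP^N_Y$ a closed immersion and $p$ the projection, invoking Verdier's isomorphism $p^!(-) \cong Lp^*(-) \otimes \omega_p[N]$ for the smooth $p$ together with a local $R\sHom$ computation for $i$, but the step-by-step reduction through $f^! = i^! p^!$ is more delicate than the argument sketched here.
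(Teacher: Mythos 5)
The paper gives no proof of this lemma, deferring to \cite[Theorem 5.4]{Neeman_The_Grothendieck_duality_theorem} or \cite[III, Prop 8.8]{Hartshorne_Residues_and_duality}; your argument is a correct, self-contained rendition of precisely the Neeman route that the paper cites (reduce to an affine base via flat base change, observe that both $f^!$ and $Lf^*(-)\otimes_L f^!\sO_Y$ are coproduct-preserving triangulated functors because the proper perfect $Rf_*$ preserves perfect complexes, and check the trace-induced comparison map on the compact generator $\sO_Y$). Nothing further is needed.
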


% \begin{proof}
% For a projective morphism $f$,  Neeman's \cite{Neeman_The_Grothendieck_duality_theorem} and Hartshorne's definition \cite{Hartshorne_Residues_and_duality} of $f^!$   agree, since both are right adjoint functors of $Rf_*$. Hence we may use the results of \cite{Neeman_The_Grothendieck_duality_theorem} to prove the lemma. By \cite[Theorem 5.4]{Neeman_The_Grothendieck_duality_theorem}, it is enough to show that $f^!$ commutes with coproducts. Fix an ample line bundle $\sL$ on $X$. By the discussion of \cite[Example 1.10]{Neeman_The_Grothendieck_duality_theorem} for every $M \in \bZ$, $\{ \sL^m[n] | m, n \in \bZ, m>M\}$ is a compact generating set for $D(qc/X)$. Fix $M$ such that $H^i(X_y, \sL^m)=0$ for all $m>M$ and all $y \in Y$. Then for every $m >M$, $R f_* (\sL^m[n] )$ is supported only in cohomological degree $-n$ and furthermore with locally free cohomology sheaf according to \cite[Theorem 12.11]{Hartshorne_Algebraic_geometry}. In particular, it is a compact object of $D(qc/Y)$ 
% \cite[Example 1.10]{Neeman_The_Grothendieck_duality_theorem} (to be precise in \cite[Example 1.10]{Neeman_The_Grothendieck_duality_theorem}, it is only stated that  line bundles are compact, but verbatim the same proof works for a locally free sheaves, by replacing inverse with 
% dual). Hence $R f_* (\sF)$ is compact for every element $\sF$ of the generating set $\{ \sL^m[n] | m, n \in \bZ, m>M\}$ of $D(qc/X)$. Therefore, by \cite[Theorem 5.1]{Neeman_The_Grothendieck_duality_theorem} $f^!$ commutes with coproduct, which finishes our proof.
% \end{proof}

\begin{lemma}
\label{lem:dualizing_complex}
If $f : X \to Y$ is a flat projective morphism between projective schemes with Gorenstein base of pure dimension $d$, then
\begin{equation*}
\omega_{X/Y}^{\bullet} \otimes f^* \omega_Y[d] \cong \omega_X^{\bullet}
\end{equation*}
\end{lemma}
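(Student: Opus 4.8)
The plan is to deduce \autoref{lem:dualizing_complex} directly from \autoref{lem:f_upper_shriek} together with the basic structure of the dualizing complex of a Gorenstein scheme. First I would recall that since $Y$ is a projective Gorenstein scheme of pure dimension $d$, its dualizing complex $\omega_Y^{\bullet}$ is (up to the derived-category equivalences of \autoref{subsec:notation}) a single line bundle placed in degree $-d$; that is, $\omega_Y^{\bullet} \cong \omega_Y[d]$ where $\omega_Y$ is an actual invertible sheaf. This is the standard characterization of Gorenstein schemes among Cohen--Macaulay ones. In particular $\omega_Y[d]$ is a perfect complex (locally isomorphic to $\sO_Y[d]$), so derived tensoring with it is the same as ordinary tensoring followed by a shift, and $Lf^*(\omega_Y[d]) \cong f^*\omega_Y[d]$ with no higher derived terms.

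Next I would write $\omega_X^{\bullet} = \omega_{X/\Spec \bC}^{\bullet} \cong (gf)^! \sO_{\Spec \bC}$ where $g : Y \to \Spec \bC$ is the structure morphism, and similarly $\omega_Y^{\bullet} \cong g^! \sO_{\Spec \bC}$. By functoriality of the upper-shriek functor, $(gf)^! \cong f^! g^!$, so $\omega_X^{\bullet} \cong f^!(\omega_Y^{\bullet}) \cong f^!(\omega_Y[d])$. Now apply \autoref{lem:f_upper_shriek} with $\sC^{\bullet} = \omega_Y[d] \in D(Y)$: we get
\begin{equation*}
\omega_X^{\bullet} \cong f^!(\omega_Y[d]) \cong Lf^*(\omega_Y[d]) \otimes_L f^!\sO_Y \cong f^*\omega_Y[d] \otimes_L \omega_{X/Y}^{\bullet},
\end{equation*}
using $\omega_{X/Y}^{\bullet} = f^!\sO_Y$ by definition and the fact that $Lf^*(\omega_Y[d]) \cong f^*\omega_Y[d]$ since $\omega_Y$ is a line bundle. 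Finally, tensoring a complex with the line bundle $f^*\omega_Y$ (shifted by $d$) is again an ordinary tensor product, so $f^*\omega_Y[d] \otimes_L \omega_{X/Y}^{\bullet} \cong \omega_{X/Y}^{\bullet} \otimes f^*\omega_Y[d]$, which is exactly the claimed formula.

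I expect the only genuinely delicate point to be the bookkeeping around \autoref{lem:f_upper_shriek}: it is stated for a morphism between \emph{projective} schemes, so to invoke it I must make sure $X$ and $Y$ are projective (which holds here since $f$ is projective and $Y$ is projective), and I should be slightly careful that the composition formula $(gf)^! \cong f^! g^!$ and \autoref{lem:f_upper_shriek} are compatible — both are part of the Grothendieck duality package of \cite{Hartshorne_Residues_and_duality}, so this is routine but worth a sentence. The other ingredient, that the dualizing complex of a Gorenstein scheme is a shifted line bundle, is standard (e.g.\ \cite[V]{Hartshorne_Residues_and_duality}); the purity-of-dimension hypothesis is what guarantees the shift is by the single integer $d$ rather than varying by component. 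No serious obstacle remains once these two facts are in place.
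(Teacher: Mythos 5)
Your proposal is correct and matches the paper's intended argument: the lemma is stated immediately after \autoref{lem:f_upper_shriek} precisely as its ``most important consequence,'' obtained by applying that lemma to $\omega_Y^{\bullet}\cong\omega_Y[d]$ together with $(g\circ f)^!\cong f^!\circ g^!$. The paper leaves this deduction implicit, and your write-up supplies exactly the missing routine steps.
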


% \begin{proof}
% \begin{equation*}
% \omega_X^{\bullet} \cong f^! \omega_Y^{\bullet} \cong f^! \omega_Y[d] \cong 
% \underbrace{f^! \sO_Y  \otimes f^* \omega_Y[d]}_{\textrm{\autoref{lem:f_upper_shriek} and flatness of $f$ and $\omega_Y$}}
% \cong \omega_{X/Y}^{\bullet} \otimes f^* \omega_Y[d]
% \end{equation*}
% \end{proof}

We need a third lemma as well about the behavior of relative dualizing complexes, for which we introduce first some notation.

\begin{notation}
\label{notation:product}
For a morphism $f : X \to Y$ of schemes, define
\begin{equation*}
X^m_Y :=\underbrace{X \times_Y X \times_Y \dots \times_Y X}_{\textrm{$m$
times}} .
\end{equation*}
and $f^m_Y : X^m_Y \to Y$ the base morphism.  
In most cases, when $Y$ is obvious from the context, we omit $Y$ from our notation. We denote then the $i$-th projection morphisms $X^m \to X$ by $p_i $.
\end{notation}

\begin{lemma}
\label{lem:product_dualizing_complex}
Using \autoref{notation:product}, if $f : X \to Y$ is a flat projective morphism of projective schemes, then 
\begin{equation*}
\omega_{X^m/Y}^{\bullet} \cong \bigotimes_L {}_{i=1}^m Lp_i^* \omega_{X/Y}^{\bullet}. 
\end{equation*}
\end{lemma}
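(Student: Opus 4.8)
\textbf{Proof proposal for \autoref{lem:product_dualizing_complex}.}

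The plan is to proceed by induction on $m$, the case $m=1$ being vacuous. For the inductive step it suffices to treat $m=2$ in a relative form: given that the formula holds for $m-1$, one writes $X^m_Y = X^{m-1}_Y \times_Y X$ and applies the two-factor case to the flat projective morphism $X^{m-1}_Y \to Y$ together with the projection to the last factor. So the heart of the matter is the identity
\begin{equation*}
\omega_{X\times_Y X / Y}^{\bullet} \cong Lp_1^* \omega_{X/Y}^{\bullet} \otimes_L Lp_2^* \omega_{X/Y}^{\bullet}.
\end{equation*}

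First I would set up the fiber square: with $q_i : X\times_Y X \to X$ the two projections and $f : X\to Y$ the structure map, we have $f\circ q_1 = f \circ q_2$, and say $p_1 = q_1$ is the base change of $f$ along $f = f\circ q_2$, while the composite $X\times_Y X \xrightarrow{q_1} X \xrightarrow{f} Y$ equals $f^2_Y$. The key tool is flat base change for the twisted inverse image functor, i.e.\ $q_1^! \sO_X \cong Lq_2^* f^! \sO_Y$ (valid since $f$ is flat and we are base-changing a flat projective morphism along a morphism of Noetherian schemes; this is the standard compatibility of $(-)^!$ with flat base change, as in \cite[III, Thm 8.7]{Hartshorne_Residues_and_duality}). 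Combining this with the transitivity $\omega_{X^2/Y}^{\bullet} = (f^2_Y)^! \sO_Y = q_1^!(f^! \sO_Y)$ and then applying \autoref{lem:f_upper_shriek} to the flat projective morphism $q_1$:
\begin{equation*}
\omega_{X^2/Y}^{\bullet} \cong q_1^!\big(f^!\sO_Y\big) \cong Lq_1^*\big(f^!\sO_Y\big) \otimes_L q_1^!\sO_X \cong Lq_1^* \omega_{X/Y}^{\bullet} \otimes_L Lq_2^* \omega_{X/Y}^{\bullet},
\end{equation*}
where in the last step I used $f^!\sO_Y = \omega_{X/Y}^{\bullet}$ by definition and the base change identity $q_1^!\sO_X \cong Lq_2^* \omega_{X/Y}^{\bullet}$. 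This is exactly the two-factor claim.

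For the inductive step proper, write $g = f^{m-1}_Y : X^{m-1}_Y \to Y$, which is flat and projective, and let $\pi : X^m_Y = X^{m-1}_Y \times_Y X \to X^{m-1}_Y$ and $\rho : X^m_Y \to X$ be the projections; note $\rho = p_m$ and $\pi$ composed with the $j$-th projection $X^{m-1}_Y \to X$ is $p_j$ for $j < m$. Applying the two-factor case to $g$ and the last factor gives $\omega_{X^m/Y}^{\bullet} \cong L\pi^* \omega_{X^{m-1}/Y}^{\bullet} \otimes_L L\rho^* \omega_{X/Y}^{\bullet}$; feeding in the inductive hypothesis $\omega_{X^{m-1}/Y}^{\bullet} \cong \bigotimes_L{}_{j=1}^{m-1} L q_j'^* \omega_{X/Y}^{\bullet}$ (with $q_j'$ the projections of $X^{m-1}_Y$) and using $L\pi^* L q_j'^* = L p_j^*$ finishes the induction.

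The main obstacle I anticipate is purely bookkeeping: making sure the base change isomorphism for $(-)^!$ is being invoked in a legitimate setting (flat, projective, Noetherian — all satisfied by hypothesis) and that the various $L\pi^*$, $Lq^*$ and the non-derived $\otimes$ in the statement are harmonized. Strictly, since everything is flat the pullbacks $Lp_i^*$ agree with ordinary pullbacks and $\otimes_L$ could differ from $\otimes$, but the statement of the lemma is an isomorphism in the derived category, so $\otimes_L$ is what is meant and no flatness of the complexes $\omega_{X/Y}^{\bullet}$ themselves is needed; I would simply note this compatibility once. No genuinely hard step is expected — the lemma is a formal consequence of Grothendieck duality's base change and the projection formula \autoref{lem:f_upper_shriek}.
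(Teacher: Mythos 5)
Your proposal is correct and follows essentially the same route as the paper: induction on $m$ via the projection $X^m_Y \to X^{m-1}_Y$, using \autoref{lem:f_upper_shriek} together with transitivity of $(-)^!$ and flat base change to identify $\omega_{X^m/X^{m-1}}^{\bullet}$ with $Lp_m^*\omega_{X/Y}^{\bullet}$. You merely make explicit (the transitivity $(f^m)^!=q^!\circ(f^{m-1})^!$ and the base-change isomorphism $q_1^!\sO_X\cong Lq_2^*f^!\sO_Y$) what the paper leaves implicit.
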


\begin{proof}
The statement is vacuous for $m=1$. For $m>1$ we prove by induction. By the induction hypothesis
\begin{equation}
\label{eq:product_dualizing_complex:hypothesis}
\omega_{X^{m-1}/Y}^{\bullet} \cong \bigotimes_L {}_{i=1}^{m-1} L \op_i^* \omega_{X/Y}^{\bullet},
\end{equation}
where $\op_i$ is the $i$-th projection $X^{m-1} \to X$. Let $q: X^m \to X^{m-1}$  be the projection on the first $m-1$ factors. Then  the following computation concludes our proof.
\begin{multline*}
%\label{eq:product_dualizing_complex:isomorphism}
\omega_{X^m/Y}^{\bullet} \cong 
\underbrace{Lq^* \omega_{X^{m-1}/Y}^{\bullet} \otimes_L \omega_{X^m/X^{m-1}}^{\bullet}}_{\textrm{\autoref{lem:f_upper_shriek}}} 
\cong  
\underbrace{Lq^* \left( \bigotimes_L {}_{i=1}^{m-1} L \op_i^* \omega_{X/Y}^{\bullet} \right)   \otimes_L \omega_{X^m/X^{m-1}}^{\bullet}}_{\textrm{\autoref{eq:product_dualizing_complex:hypothesis}}}
\\ \cong  
\underbrace{\left(\bigotimes_L {}_{i=1}^{m-1} Lp_i^* \omega_{X/Y}^{\bullet} \right)    \otimes_L Lp_n^* \omega_{X/Y}^{\bullet}}_{\textrm{$L q^* L\op_i^* \cong L p_i^*$ and flat base change \cite[Theorem 8.7.5]{Hartshorne_Residues_and_duality}}}
 \cong  \bigotimes_L {}_{i=1}^m Lp_i^* \omega_{X/Y}^{\bullet}.
\end{multline*}
\end{proof}

Having finished the lemmas about the relative dualizing complex, we need two more auxiliary statements used in the proof of \autoref{prop:nef}.

\begin{lemma}
\label{lem:generic_global_generation_nef}
If $\sF$ is a vector bundle on a smooth projective curve $Y$ and $\sL$ is a line bundle such that for every $m>0$, $S^m(\sF) \otimes \sL$ is generically globally generated, then $\sF$ is nef. 
\end{lemma}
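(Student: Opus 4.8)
The plan is to reduce nefness of $\sF$ to nefness of the tautological bundle $\sO_{\bP(\sF)}(1)$ on $\bP := \bP(\sF)$, and then verify the latter using the standard criterion that a line bundle on a projective variety is nef if and only if a sufficiently high power is generically globally generated along every curve — or rather, using the direct characterization: a line bundle $\sM$ on a projective variety $\bP$ is nef if and only if for every irreducible curve $C \subset \bP$ the restriction $\sM|_C$ has nonnegative degree. First I would recall that $S^m(\sF) = \pi_* \sO_{\bP}(m)$, where $\pi : \bP \to Y$ is the projection, and that $\pi^* S^m(\sF) \to \sO_{\bP}(m)$ is surjective for all $m > 0$. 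Hence if $S^m(\sF) \otimes \sL$ is globally generated over the generic point of $Y$, pulling back along $\pi$ and composing with the surjection $\pi^* S^m(\sF) \to \sO_{\bP}(m)$ shows that $\sO_{\bP}(m) \otimes \pi^* \sL$ is globally generated over the generic point of $\bP$; that is, $\sO_{\bP}(m) \otimes \pi^* \sL$ is generically globally generated on $\bP$ for every $m > 0$.

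Next I would fix an arbitrary irreducible curve $C \subset \bP$ with normalization $\nu : \tilde C \to C$, and set $a := \deg \nu^* \sO_{\bP}(1)$ and $b := \deg \nu^*\pi^*\sL$. Since $\sO_{\bP}(m) \otimes \pi^*\sL$ is generically globally generated, it has a nonzero global section; restricting that section to $\tilde C$ (it does not vanish identically on $C$ provided $C$ is not contained in the base locus, which we may arrange because "generically globally generated" means the sections generate at the generic point of $\bP$, hence the common zero locus of all sections is a proper closed subset, and we may pick a section not vanishing on the generic point of $C$) gives $ma + b = \deg \nu^*(\sO_{\bP}(m) \otimes \pi^*\sL) \ge 0$ for all $m > 0$. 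Letting $m \to \infty$ forces $a \ge 0$. As $C$ was arbitrary, $\sO_{\bP}(1)$ is nef on $\bP$, which by definition means $\sF$ is a nef vector bundle on $Y$.

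The one point requiring a little care — and the main potential obstacle — is handling those curves $C \subset \bP$ that lie in the base locus of $\sO_{\bP}(m) \otimes \pi^*\sL$ for the particular section chosen: the argument needs, for each given $C$, a section not vanishing identically on $C$. This is where generic global generation is exactly the right hypothesis: the sections of $S^m(\sF) \otimes \sL$ generate the stalk at the generic point $\eta$ of $Y$, so after pullback the sections of $\sO_{\bP}(m)\otimes\pi^*\sL$ generate at every point of the generic fibre $\bP_\eta$, in particular they do not all vanish on any curve dominating $Y$; and if $C$ is contained in a closed fibre $\bP_y$, then $C$ maps to the point $y$, so $\nu^*\pi^*\sL$ is trivial ($b = 0$) and $\sO_{\bP}(1)|_C$ is the restriction of a relatively ample bundle to a curve in a fibre of a projective bundle, hence has positive degree — so that case is immediate and needs no section at all. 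Assembling these two cases completes the proof.
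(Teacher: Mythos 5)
Your proof is correct, and it takes a mildly different route from the paper's, so a comparison is in order. The paper's own argument is shorter but rests on the same core computation: it invokes the quotient--line-bundle characterization of nefness on a curve (for every finite cover $\tau : Z \to Y$ by a smooth curve and every quotient line bundle $\sE$ of $\tau^*\sF$ one needs $\deg \sE \geq 0$), observes that the surjection $S^m(\tau^*\sF) \twoheadrightarrow \sE^{\otimes m}$ transports generic global generation to $\sE^{\otimes m}\otimes\tau^*\sL$, and concludes $m\deg\sE + \deg\tau^*\sL \geq 0$ for all $m>0$, hence $\deg\sE\geq 0$. You instead unwind that characterization into its geometric form on $\bP(\sF)$ and test $\sO_{\bP(\sF)}(1)$ against all irreducible curves directly; under the dictionary between quotient line bundles of $\tau^*\sF$ and curves in $\bP(\sF)$ mapping finitely onto $Y$, your horizontal case is literally the paper's computation, while your fibre case ($C$ contained in $\bP_y$, where $\sO_{\bP}(1)$ restricts to an ample bundle on a projective space) is exactly the case that the quotient-line-bundle criterion absorbs silently. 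The decisive inequality $ma+b\geq 0$ for all $m>0$ forcing $a\geq 0$, and the key input that a generically globally generated line bundle on an integral curve has a nonzero section and hence nonnegative degree, are identical in both. Your version is somewhat longer but more self-contained, since it avoids appealing to the (standard but not entirely trivial) fact that nefness of a bundle on a curve can be tested on quotient line bundles of pullbacks under finite covers; the paper's version is more economical. The one point you flagged as delicate --- finding, for a given curve $C$ dominating $Y$, a section of $\sO_{\bP}(m)\otimes\pi^*\sL$ not vanishing identically on $C$ --- is handled correctly: generation at every point of the generic fibre $\bP_\eta$ in particular gives generation at the generic point of $C$, which suffices.
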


\begin{proof}
Take a finite cover $\tau : Z \to Y$ by a smooth curve and a quotient line bundle $\sE$ of $\tau^* \sF$. Since $S^m(\sF) \otimes \sL$ is generically globally generated, so is $S^m(\tau^* \sF) \otimes \tau^* \sL$ and hence  $\sE^m \otimes \tau^* \sL$ as well. Therefore $m \deg (\sE) + \deg (\tau^* \sL) \geq 0$ for all $m>0$. In particular then $\deg (\sE) \geq 0$. Since this is true for arbitrary $\tau$ and $\sE$, $\sF$ is nef indeed.
\end{proof}

\begin{proposition}
\label{prop:direct_sum_initial}
If $f : X \to Y$ a flat projective morphism with connected fibers, such that $Y$ has rational singularities, then $Rf_* \omega_{X/Y}^{\bullet} \cong R^{\leq -1} f_* \omega_{X/Y}^{\bullet} \oplus R^{0} f_* \omega_{X/Y}^{\bullet}$.
\end{proposition}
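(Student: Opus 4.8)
The plan is to reinterpret the statement via Grothendieck duality as a splitting assertion for $Rf_*\sO_X$, and then to produce that splitting by base changing along a resolution of $Y$.

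\emph{Reformulation.} Grothendieck--Verdier duality for the proper morphism $f$, applied with $\sO_Y$ in place of a dualizing complex, gives $Rf_*\omega_{X/Y}^{\bullet}=Rf_*R\sHom_X(\sO_X,f^!\sO_Y)\cong R\sHom_Y(Rf_*\sO_X,\sO_Y)$. Write $\sC^{\bullet}:=Rf_*\sO_X$; it is concentrated in cohomological degrees $[0,n]$ with $h^0(\sC^{\bullet})=f_*\sO_X$. I claim it suffices to prove that the canonical morphism $h^0(\sC^{\bullet})[0]=\tau_{\le 0}\sC^{\bullet}\to\sC^{\bullet}$ admits a retraction in $D(Y)$, i.e.\ that $\sC^{\bullet}\cong h^0(\sC^{\bullet})[0]\oplus\tau_{\ge 1}\sC^{\bullet}$. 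Indeed, applying $R\sHom_Y(-,\sO_Y)$ to this decomposition splits $Rf_*\omega^{\bullet}_{X/Y}$ as $R\sHom_Y(\tau_{\ge 1}\sC^{\bullet},\sO_Y)\oplus R\sHom_Y(h^0(\sC^{\bullet})[0],\sO_Y)$; since any morphism in $D(Y)$ from a complex concentrated in degrees $\ge 1$ to $\sO_Y$ vanishes, the first summand has no cohomology in degrees $\ge 0$, hence equals $R^{\le -1}f_*\omega^{\bullet}_{X/Y}$, and the second equals $R^{0}f_*\omega^{\bullet}_{X/Y}$. (This last identification also needs $Rf_*\omega^{\bullet}_{X/Y}\in D^{\le 0}$, i.e.\ $\sExt^{>0}_Y(f_*\sO_X,\sO_Y)=0$, which holds once $f_*\sO_X$ is locally free; in fact $f_*\sO_X=\sO_Y$ whenever the fibers are geometrically reduced, as in \autoref{prop:nef}.)

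\emph{Descent to a resolution.} As $Y$ has rational singularities it is normal and $R\rho_*\sO_{\tilde Y}\cong\sO_Y$ for a resolution $\rho\colon\tilde Y\to Y$. Set $\tilde X:=X\times_Y\tilde Y$ with projections $\sigma\colon\tilde X\to X$ and $\tilde f\colon\tilde X\to\tilde Y$; then $\tilde f$ is again flat, projective, and has connected fibers, now over the \emph{smooth} scheme $\tilde Y$. Flat base change along the flat map $f$ yields $R\sigma_*\sO_{\tilde X}\cong f^*R\rho_*\sO_{\tilde Y}\cong\sO_X$, so from $f\circ\sigma=\rho\circ\tilde f$ we obtain $Rf_*\sO_X\cong Rf_*R\sigma_*\sO_{\tilde X}\cong R\rho_*(R\tilde f_*\sO_{\tilde X})$. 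Granting that $R\tilde f_*\sO_{\tilde X}$ splits off its lowest cohomology sheaf over the smooth $\tilde Y$, applying $R\rho_*$ splits $Rf_*\sO_X$ into a summand in degrees $\ge 1$ and the summand $R\rho_*(\tilde f_*\sO_{\tilde X})$, which has $h^0=\rho_*\tilde f_*\sO_{\tilde X}=f_*\sO_X$; using $R^{>0}\rho_*(\tilde f_*\sO_{\tilde X})=0$ (clear once $\tilde f_*\sO_{\tilde X}=\sO_{\tilde Y}$, e.g.\ for reduced fibers) this summand is exactly $f_*\sO_X[0]$, giving the splitting required above.

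\emph{Main obstacle.} The hard part is the input over the smooth base: that $R\tilde f_*\sO_{\tilde X}$ splits off its $h^0$. For a completely arbitrary flat projective family this can fail, so one must exploit the available structure — $R\tilde f_*\sO_{\tilde X}$ is a perfect complex of amplitude $[0,n]$ carrying a commutative $\sO_{\tilde Y}$--algebra structure, its cohomology sheaves $R^i\tilde f_*\sO_{\tilde X}$ are locally free (Kollár--Kovács \cite{Kollar_Kovacs_Log_canonical_singularities_are_Du_Bois} once the fibers are Du Bois, as in \autoref{prop:nef}), and then a Deligne-type degeneration argument (or an application of the injectivity theorem \autoref{thm:injectivity_Du_Bois} in the spirit of Kollár's decomposition theorem) produces the splitting. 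A resolution-free alternative for the retraction: after an étale base change on $Y$ along which $f$ acquires a section $s$, the map $Rf_*\sO_X\to R(f\circ s)_*\sO_Y=\sO_Y$ induced by the comorphism $\sO_X\to s_*\sO_Y$ retracts the natural map $\sO_Y[0]\to Rf_*\sO_X$; one then descends and treats $f_*\sO_X/\sO_Y$ separately.
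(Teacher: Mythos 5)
Your first paragraph (the Grothendieck-duality reformulation, reducing the claim to the assertion that $h^0(Rf_*\sO_X)[0]\to Rf_*\sO_X$ splits off as a direct summand, together with the bookkeeping identifying the two dual summands with $R^{0}f_*\omega_{X/Y}^{\bullet}$ and $R^{\le -1}f_*\omega_{X/Y}^{\bullet}$) is correct and is exactly the paper's argument; your remark that one needs $f_*\sO_X$ locally free (in practice $f_*\sO_X\cong\sO_Y$) to place the second summand purely in degree $0$ is a legitimate point that the paper handles by invoking $f_*\sO_X\cong\sO_Y$.

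The genuine gap is that you never prove the one statement that carries all the content: that the unit $\sO_Y\to Rf_*\sO_X$ admits a retraction in $D(Y)$. The paper obtains this in one line from \cite[Theorem 4.1.3]{Bhatt_Thesis} (in characteristic $0$, a variety with rational singularities is a \emph{derived splinter}: for every proper surjective $f$, the map $\sO_Y\to Rf_*\sO_X$ splits), and this is precisely where the hypothesis that $Y$ has rational singularities enters. Your detour through a resolution $\rho:\tilde Y\to Y$ does not make progress: a smooth base is itself a base with rational singularities, so ``split off $h^0$ of $R\tilde f_*\sO_{\tilde X}$ over smooth $\tilde Y$'' is the same problem, not an easier one, and you explicitly leave it open. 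None of the three mechanisms you gesture at closes it as stated: Deligne's degeneration/decomposition requires $f$ smooth, whereas here the fibers are singular; \autoref{thm:injectivity_Du_Bois} concerns cohomology of a single Du Bois scheme twisted by a general divisor and says nothing about splitting $Rf_*\sO_X$ over the base; and a flat projective morphism with non-smooth fibers need not acquire a section after \'etale base change, nor does a multisection $Z\subset X$ yield a retraction of $\sO_Y\to Rf_*\sO_X$ without the trace-plus-Grauert--Riemenschneider argument that is the actual proof of Bhatt's theorem. Local freeness of the sheaves $R^if_*\sO_X$ also does not suffice, since the extension classes obstructing a splitting live in groups like $\sExt^2_Y(R^jf_*\sO_X,R^{j-1}f_*\sO_X)$, which need not vanish. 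To complete the proof you should either cite the derived-splinter theorem or reproduce its argument; once that splitting is in hand, your paragraph one finishes the proposition.
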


\begin{proof}
According to  \cite[Theorem 4.1.3]{Bhatt_Thesis}, the natural inclusion $\sO_Y \to Rf_* \sO_X$ splits. Since $f_* \sO_X \cong \sO_Y$ by the connectedness and flatness assumptions, this means that  $Rf_* \sO_X \cong \sO_Y \oplus R^{\geq 1} f_* \sO_X$. 
\begin{multline*}
R f_* \omega_{X/Y}^{\bullet} \cong R f_* R \sHom_X( \sO_X, \omega_{X/Y}^{\bullet}) \cong 
\underbrace{R \sHom_Y(R f_* \sO_X, \sO_Y) }_{\textrm{Grothendieck duality}}
\cong 
R \sHom_Y(\sO_Y \oplus R^{\geq 1} f_* \sO_X, \sO_Y) 
\\ 
%\cong 
%
%\underbrace{R \sHom_Y(\sO_Y \oplus R^{\geq 1} f_* \sO_X, \omega_Y[d]) }_{\textrm{$Y$ is Cohen-Macaulay of dimension $d$}}
%
\cong  
R \sHom_Y(R^{\geq 1} f_* \sO_X, \sO_Y) \oplus \sO_Y
\end{multline*}
Our proof is concluded by noting that $R^{\geq 1} f_* \sO_X$ is supported in cohomological degrees greater than $0$, and therefore $R \sHom_Y(R^{\geq 1} f_* \sO_X, \sO_Y)$ is supported in cohomological degrees less than $0$. 
\end{proof}

\begin{proof}[Proof of \autoref{prop:nef}]
According to \autoref{lem:locally_free}, we only have to prove  that $R^{-1} f_* (\omega_{X/Y}^{\bullet})$ is nef.
%Since the fibers of $f$ are reduced, so is $X$. By flatness and \cite[Corollary III.9.6]{Hartshorne_Algebraic_geometry}, $X$ is also of pure dimension $n+1$. 
%Furthermore,  by \cite[Lemma 4.2]{Patakfalvi_Schwede_Depth_of_F_singularities}, $X$ is $S_d$ and if $d=n$, then $X$ is $S_{n+1}$ (or equivalently Cohen-Macaulay).  Therefore, \autoref{thm:direct_sum} applies. 
%Also, by \autoref{lem:dualizing_complex} we may replace $\omega_X^{\bullet}$ in the statement of \autoref{prop:direct_sum_initial} by $\omega_{X/Y}^{\bullet}$ if we also shift the indices by $1$. 
% \begin{equation}
% That is, for all $i < d$ (or $i \leq n$ in the $d=n$ case) the following holds.
% \label{eq:nef:direct_sum}
% R f_* \omega_{X/Y}^{\bullet} \cong R^{\leq -i} f_* (\omega_{X/Y}^{\bullet} ) \oplus \left( \bigoplus_{l>-i} R^{l} f_* \left(\omega_{X/Y}^{\bullet} [-l] \right) \right) 
% \end{equation}
Fix an integer $m>0$  and consider the following stream of isomorphisms and surjections, using \autoref{notation:product}.
\begin{equation}
\label{eq:stable_family_global_generation:long}
\begin{split}
R^{-m} (f^m)_* (\omega_{X^m/Y}^{\bullet})  & 
\cong 
\underbrace{R^{-m} (f^m)_* \left( \bigotimes_L {}_{i=1}^m L p_i^* (\omega_{X/Y}^{\bullet} ) \right)}_{\textrm{\autoref{lem:product_dualizing_complex}}} \\
& \cong 
\underbrace{h^{-m} \left( \bigotimes_L {}_{j=1}^m R f_* (\omega_{X/Y}^{\bullet} )
\right)}_{\textrm{K\"unneth formula}} \\
& \cong 
\underbrace{h^{-m} \left( \bigotimes_L {}_{j=1}^m \left( R^{\leq -1} f_* (\omega_{X/Y}^{\bullet} ) \oplus  R^{0} f_* (\omega_{X/Y}^{\bullet} ) \right) 
\right)}_{\textrm{\autoref{prop:direct_sum_initial}}} \\
& \twoheadrightarrow h^{-m} \left( \bigotimes_L {}_{j=1}^m R^{ \leq -1} f_* (\omega_{X/Y}^{\bullet} ) 
\right)
 \\
& \cong \underbrace{\bigotimes_{i=1}^m R^{-1} f_* (\omega_{X/Y}^{\bullet} )}_{ \parbox{130pt}{\tiny  $\bigotimes_L$ is left derived, and $R^{-1} f_* (\omega_{X/Y}^{\bullet})$ is the highest non-zero cohomology sheaf of $R^{\leq -1} f_* (\omega_{X/Y}^{\bullet})$}}
%
%& \cong  \underbrace{\bigotimes_{i=1}^m R^{i} f_* (\omega_{X/Y} \otimes \sL)
%}_{\textrm{Lemma \ref{lem:splitting_Du_Bois}}} 
%
\\ &  \twoheadrightarrow S^m( R^{-1} f_* (\omega_{X/Y}^{\bullet} ))
 \\
\end{split}
\end{equation}
Fix any $y_0 \in Y$ and $N \in \bZ$, such that $|N y_0|$ is base-point free. By \autoref{cor:generic_global_generation}, $R^{-m} (f^m)_* (\omega_{X^m/Y}^{\bullet}) \otimes \omega_Y((N+1)y_0)$ is generically globally generated. Hence by \autoref{eq:stable_family_global_generation:long}, So is $S^m( R^{-1} f_* (\omega_{X/Y}^{\bullet} )) \otimes \omega_Y((N+1)y_0)$. Therefore, by \autoref{lem:generic_global_generation_nef}, $R^{-1} f_* (\omega_{X/Y}^{\bullet} )$ is nef, which concludes our proof.
\end{proof}

\begin{proof}[Proof of \autoref{thm:semi_negative}]
By \autoref{lem:locally_free}, the  statement on $R^1 f_* \sO_X$ and $R^{-1} \omega_{X/Y}^{\bullet}$ is  equivalent indeed. By \cite[Theorem 7.8]{Kollar_Kovacs_Log_canonical_singularities_are_Du_Bois}, $R^1 f_* \sO_X$ is combatible with arbitrary base-change. Furthermore, since nefness is decided on curves, we may assume that $Y$ is a smooth curve. However, then using \autoref{lem:locally_free} again, \autoref{prop:nef} concludes our proof.
\end{proof}

\section{Injectivity and surjectivity for Du Bois schemes}
\label{subsec:injectivity_surjectivity_Du_Bois}

Here we prove \autoref{thm:injectivity_Du_Bois} and \autoref{cor:generic_global_generation}.  
%Theorem \ref{thm:injectivity_Du_Bois} is also well known for rational singularities. However, proving it for Du Bois singularities is by far a non-trivial extension. It is a product of the work of many, most notably Fujino \cite[Theorem 2.38]{Fujino_Introduction_to_the_log_minimal_model_program_for_log_canonical_pairs} and Schwede \cite[Theorem 4.6]{Schwede_A_simple_characterization_of_Du_Bois_singularities}.  The main trick of the proof was communicated to the author by Karl Schwede.
  
\begin{proof}[Proof of \autoref{thm:injectivity_Du_Bois}]
Consider a closed embedding of $X$ into a smooth scheme $Y$, and let $ \rho : Z  \to Y$ be an embedded log-resolution of $(Y,X)$, which is isomorphism on $Y \setminus X$. Set $E:=\rho^{-1}(X)_{\red}$ and $\pi:= \rho|_E$. By  \cite[Theorem 4.6]{Schwede_A_simple_characterization_of_Du_Bois_singularities}, the natural homomorphism $\sO_X \to R \pi_* \sO_E$ is quasi-isomorphism. This yields the following isomorphisms.
\begin{equation}
\label{eq:injectivity_Du_Bois:dualizing}
R \pi_* \omega_E^{\bullet} \cong R \pi_* R \sHom_E( \sO_E, \omega_E^{\bullet}) 
\cong \underbrace{R \sHom_X( R \pi_* \sO_E , \omega_X^{\bullet}  ) }_{\textrm{Grothendieck-duality}}
\cong 
\underbrace{\omega_X^{\bullet}}_{\textrm{\cite[Theorem 4.6]{Schwede_A_simple_characterization_of_Du_Bois_singularities}}}
\end{equation}
\begin{multline}
\label{eq:injectivity_Du_Bois:hypercohomology}
H^{i + \dim E} (E, \omega_{E} \otimes \pi^* \sL ) \cong 
\underbrace{H^i (E, \omega_{E}^{\bullet} \otimes \pi^* \sL )}_{\textrm{$E$  Gorenstein, hence $\omega_E^{\bullet} \cong \omega_E [ \dim E]$}} 
\\ \cong
\underbrace{H^i(Y, R \pi_* (\omega_{E}^{\bullet} \otimes \pi^* \sL )) }_{\textrm{Grothendieck spectral sequence}} 
 \cong 
\underbrace{H^i(Y, R \pi_* (\omega_{E}^{\bullet}) \otimes \sL ) }_{\textrm{projection formula}} 
\cong 
\underbrace{H^i( \omega_{X}^{\bullet} \otimes \sL ) }_{\textrm{\autoref{eq:injectivity_Du_Bois:dualizing}}} 
\end{multline}
Furthermore, by replacing $\sL$ in \autoref{eq:injectivity_Du_Bois:hypercohomology} with $\sL(F)$, one obtains that
\begin{equation}
\label{eq:injectivity_Du_Bois:hypercohomology2}
H^{i+ \dim E} (E, \omega_{E} \otimes \pi^* \sL(F) ) \cong H^i( \omega_{X}^{\bullet} \otimes \sL(F) ) ,
\end{equation}
and \autoref{eq:injectivity_Du_Bois:hypercohomology} and \autoref{eq:injectivity_Du_Bois:hypercohomology2} are compatible with the natural maps induced by $\sL \to \sL(F)$. Hence, by setting $j=i + \dim E$, it is enough to prove that the natural homomorphisms
\begin{equation}
\label{eq:injectivity_Du_Bois:is_it_injective}
H^j(E,\omega_E \otimes \pi^* \sL ) \to H^j(E,\omega_E \otimes \pi^* \sL(  F) )
\end{equation}
are injective for every $j$. Note at this point that since $\pi^* F$ is a general member of a base-point free linear system, it does not contain any strata of $E$. In particular then  \cite[Theorem 2.38]{Fujino_Introduction_to_the_log_minimal_model_program_for_log_canonical_pairs} (setting $X:= E$, $D':=0$, $D:=\pi^* F$, $H$ be any divisor such that $\sO_E(H) \cong \pi^* \sL$, $t:=N$, $B:=0$, $S:=0$)  implies the injectivity of \autoref{eq:injectivity_Du_Bois:is_it_injective}.
\end{proof}

\begin{remark}
\autoref{thm:injectivity_Du_Bois} also follows from the arguments of \cite[Theorem 9.12]{Kollar_Shafarevich_maps_and_automorphic_forms} using \cite[Corollary 7.7]{Kollar_Kovacs_Log_canonical_singularities_are_Du_Bois}. Unfortunately, \cite[Theorem 9.12]{Kollar_Shafarevich_maps_and_automorphic_forms} is stated for irreducible $X$, hence we included a full proof of \autoref{thm:injectivity_Du_Bois}.
\end{remark}

To prove \autoref{cor:generic_global_generation}, we need two more lemmas. The proof of the first one is well-known exercise with exact triangles, hence we omit it. 

\begin{lemma}
\label{lem:adjunction}
If $X$ is a quasi-projective scheme and $H$ an effective Cartier divisor on it, then there is an adjunction exact triangle as follows.
\begin{equation*}
\xymatrix{
\omega_X^{\bullet} \ar[r] & \omega_X^{\bullet}(H) \ar[r] & \omega_H^{\bullet}[1] \ar[r]^{+1} & 
} 
\end{equation*}

\end{lemma}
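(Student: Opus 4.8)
The plan is to derive the triangle by applying Grothendieck duality to the structure short exact sequence of the effective Cartier divisor $H$. Write $i : H \hookrightarrow X$ for the closed immersion; it is proper and, because $H$ is Cartier, of finite Tor-dimension (the Koszul sequence below exhibits $i_*\sO_H$ with Tor-dimension $\leq 1$). Since $X$ is quasi-projective over $\bC$ it carries a dualizing complex, and with the normalization in the spirit of \autoref{subsec:notation} (so $\omega_X^\bullet = a_X^!\bC$ and $\omega_H^\bullet = a_H^!\bC = (a_X\circ i)^!\bC = i^!\omega_X^\bullet$) the functor $R\sHom_X(-,\omega_X^\bullet)$ is a duality on $D^b_{\mathrm{coh}}(X)$.

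First I would start from the exact sequence of coherent sheaves
\[
0 \to \sO_X(-H) \to \sO_X \to i_*\sO_H \to 0,
\]
regarded as a distinguished triangle $\sO_X(-H) \to \sO_X \to i_*\sO_H \xrightarrow{+1}$ in $D(qc/X)$. Next I would apply the contravariant triangulated functor $R\sHom_X(-,\omega_X^\bullet)$; after the standard rotation this sends the above to the distinguished triangle
\[
R\sHom_X(i_*\sO_H,\omega_X^\bullet) \to R\sHom_X(\sO_X,\omega_X^\bullet) \to R\sHom_X(\sO_X(-H),\omega_X^\bullet) \xrightarrow{+1}.
\]
Now I identify the three vertices: the middle term is $\omega_X^\bullet$; the right-hand term is $\omega_X^\bullet(H)$, since $\sO_X(-H)$ is invertible and $R\sHom_X(\sL,\sG)\cong \sL^{\vee}\otimes\sG$ for a line bundle $\sL$; and for the left-hand term, duality for the closed immersion $i$ gives $R\sHom_X(i_*\sO_H,\omega_X^\bullet)\cong i_*R\sHom_H(\sO_H, i^!\omega_X^\bullet)\cong i_*\omega_H^\bullet$. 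Hence the triangle reads $i_*\omega_H^\bullet \to \omega_X^\bullet \to \omega_X^\bullet(H) \xrightarrow{+1}$, and one further rotation yields exactly
\[
\omega_X^\bullet \to \omega_X^\bullet(H) \to \omega_H^\bullet[1] \xrightarrow{+1},
\]
with $i_*$ suppressed from the notation as usual.

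The only ingredient here that is not pure formalism is the duality isomorphism $R\sHom_X(i_*\sO_H,\omega_X^\bullet)\cong i_*\omega_H^\bullet$, i.e. the identification $i^!\omega_X^\bullet\cong\omega_H^\bullet$ together with the fact that $i^!$ for the finite morphism $i$ is computed by $R\sHom_X(i_*(-),-)$; this is \cite[Corollary VII.3.4]{Hartshorne_Residues_and_duality} applied to the composition $a_H = a_X\circ i$, and it is what makes the argument work for non-normal, non-Cohen--Macaulay $X$. Everything else — the structure sequence of a Cartier divisor, pulling a line bundle out of $R\sHom$, and rotating a distinguished triangle — is routine, which is why one is comfortable calling the statement a well-known exercise; the single point worth a moment's care is simply checking that the chosen $\omega_X^\bullet$ is genuinely a dualizing complex, which holds since $X$ is quasi-projective over a field.
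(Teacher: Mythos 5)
Your proof is correct and is precisely the ``well-known exercise with exact triangles'' that the paper alludes to when it omits the argument: dualize the structure sequence $0 \to \sO_X(-H) \to \sO_X \to i_*\sO_H \to 0$ against $\omega_X^{\bullet}$, identify the three vertices via $R\sHom_X(i_*\sO_H,\omega_X^{\bullet}) \cong i_*i^!\omega_X^{\bullet} \cong i_*\omega_H^{\bullet}$ and the invertibility of $\sO_X(-H)$, and rotate. Nothing to add.
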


% \begin{proof}
% If $\iota : H \to X$ is the embedding morphism, then
% \begin{equation*}
% \omega_H^{\bullet} \cong \iota^! \omega_X^{\bullet} =  R \sHom_H(\sO_H, \iota^! \omega_X^{\bullet} )  \cong 
% %
% \underbrace{R \sHom_X(\sO_H, \omega_X^{\bullet} )}_{\textrm{by Grothendieck duality}} 
% %
% .
% \end{equation*}
% Consider then the exact sequence
% \begin{equation*}
% %\label{eq:hypersurface}
% \xymatrix{
% 0 \ar[r] & \sO_X(-H)  \ar[r] & \sO_X \ar[r] & \sO_H \ar[r] & 0 ,
% }
% \end{equation*}
% and apply $R \sHom_X( \_ , \omega_X^{\bullet})$ to it:
% \begin{equation}
% \label{eq:ext_applied}
% \xymatrix{
%  \omega_H^{\bullet} \cong  R \sHom_X(\sO_H, \omega_X^{\bullet} ) \ar[r] & \omega_X^{\bullet} \ar[r] & \omega_X^{\bullet}(H) \ar[r]^-{+1} &
% }
% \end{equation}
% Rotating \autoref{eq:ext_applied} yields the statement of the lemma.
% \end{proof}

\begin{lemma}
\label{lem:surjective_DuBois}
Let $f : X \to Y$ be a flat, projective Du Bois family over a smooth projective curve, $y_0 \in Y$, $N>0$ such that $|NX_{y_0}|$ is base-point free and $A \in |NX_{y_0}|$ a generic element. Then for any $i$ and any $y \in Y$ such that $X_y \subseteq A$, the natural map $\alpha$ in the following diagram  is surjective.
\begin{equation}
\label{eq:surjective_DuBois:surjection}
\xymatrix{
H^i(X,\omega_{X/Y}^{\bullet} \otimes f^* \omega_Y ((N+1)X_{y_0} )) \cong H^i(X,\omega_X^{\bullet} (A+ X_{y_0}  )[-1]) \ar[r] \ar[dr]_{\alpha} &  H^i(A,\omega_{A}^{\bullet}(X_{y_0}) ) \cong H^i(A,\omega_{A}^{\bullet}) \ar@{->>}[d]  \\
&  H^i(X_y,\omega_{X_y}^{\bullet}) ,
}
\end{equation}
Here the horizontal homomorphism is induced by the adjunction map $\omega_X^{\bullet}(A)[-1] \to \omega_A^{\bullet}$ of \autoref{lem:adjunction}.
\end{lemma}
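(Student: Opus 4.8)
The plan is to verify the three canonical identifications in \autoref{eq:surjective_DuBois:surjection} and then to prove that the horizontal arrow and the displayed vertical surjection are each surjective; since $\alpha$ is their composite, the lemma follows. For the identifications: as $Y$ is a smooth curve, hence Gorenstein of pure dimension $1$, \autoref{lem:dualizing_complex} gives $\omega_X^{\bullet}\cong\omega_{X/Y}^{\bullet}\otimes f^*\omega_Y[1]$, and tensoring by $\sO_X(A+X_{y_0})$, shifting by $[-1]$ and using $A\sim NX_{y_0}$ (so that $\sO_X(A+X_{y_0})\cong\sO_X((N+1)X_{y_0})\cong f^*\sO_Y((N+1)y_0)$) produces the left-hand isomorphism. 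For the other two I record two facts about the generic member $A$: because the fibers of $f$ are connected and reduced (Du Bois schemes being reduced) over the reduced base $Y$, one has $f_*\sO_X=\sO_Y$, so every element of $|NX_{y_0}|$ equals $f^*D$ for some $D\in|Ny_0|$; and since $\deg(Ny_0)=N>0$ the system $|Ny_0|$ is base-point free of positive dimension, whence by Bertini a generic $D$ is a sum $q_1+\dots+q_N$ of distinct points, none of them $y_0$. Thus $A=f^*D=X_{q_1}\sqcup\dots\sqcup X_{q_N}$ is a disjoint union of fibers, disjoint from $X_{y_0}$. Disjointness from $X_{y_0}$ gives $\sO_X(X_{y_0})|_A\cong\sO_A$, hence $\omega_A^{\bullet}(X_{y_0})\cong\omega_A^{\bullet}$; and any $y$ with $X_y\subseteq A$ satisfies $y\in\{q_1,\dots,q_N\}$, so $X_y$ is one of the pieces $X_{q_j}$, i.e.\ a union of connected components of $A$. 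Therefore $\omega_A^{\bullet}$ decomposes as $\omega_{X_y}^{\bullet}$ plus the dualizing complex of the complementary components, and the restriction $H^i(A,\omega_A^{\bullet})\to H^i(X_y,\omega_{X_y}^{\bullet})$ is a split surjection, namely the vertical map of \autoref{eq:surjective_DuBois:surjection}.

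It then remains to show that the horizontal map is surjective, which is where the real work lies. Applying \autoref{lem:adjunction} to the Cartier divisor $A\subseteq X$, tensoring the resulting triangle by $\sO_X(X_{y_0})$, shifting by $[-1]$ and using $\sO_X(X_{y_0})|_A\cong\sO_A$, one obtains the exact triangle
\begin{equation*}
\omega_X^{\bullet}(X_{y_0})[-1]\longrightarrow\omega_X^{\bullet}(A+X_{y_0})[-1]\longrightarrow\omega_A^{\bullet}(X_{y_0})\xrightarrow{+1}
\end{equation*}
whose middle-to-right morphism induces precisely the horizontal map of \autoref{eq:surjective_DuBois:surjection}. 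In the associated long exact sequence of hypercohomology, the image of this horizontal map equals the kernel of the connecting homomorphism $H^i(A,\omega_A^{\bullet}(X_{y_0}))\to H^{i+1}(X,\omega_X^{\bullet}(X_{y_0})[-1])=H^i(X,\omega_X^{\bullet}(X_{y_0}))$, and this connecting homomorphism vanishes if and only if the next arrow $H^i(X,\omega_X^{\bullet}(X_{y_0}))\to H^i(X,\omega_X^{\bullet}(X_{y_0}+A))$ is injective. But this injectivity is exactly \autoref{thm:injectivity_Du_Bois}, applied with the projective Du Bois scheme $X$, the integer $N$, the line bundle $\sL:=\sO_X(X_{y_0})$ — so that $\sL^N=\sO_X(NX_{y_0})$ is globally generated, by hypothesis — and the general effective divisor $F:=A$ of $\sL^N$. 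Hence the horizontal map is surjective, and therefore so is $\alpha$.

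The main obstacle is the step just described: recognising that, via the long exact sequence, surjectivity of the adjunction-induced map is equivalent to the injectivity supplied by \autoref{thm:injectivity_Du_Bois}, and in particular that the extra twist by $X_{y_0}$ is exactly what is needed so that $A$ appears as a general member of the base-point-free, positive-dimensional linear system $|\sL^N|$ with $\sL=\sO_X(X_{y_0})$ an honest line bundle — this is why \autoref{cor:generic_global_generation} carries the twist $(N+1)y_0$ and not $Ny_0$. A more delicate point is that \autoref{thm:injectivity_Du_Bois} must be applied to the total space $X$, so one needs $X$ itself to be Du Bois; this holds because the fibers are Du Bois and the base is smooth, hence Du Bois. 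The remaining checks — that the canonical isomorphisms above are compatible with the relevant maps, so that \autoref{eq:surjective_DuBois:surjection} genuinely commutes — are routine bookkeeping with Grothendieck duality, the projection formula and flat base change.
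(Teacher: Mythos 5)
Your proof is correct and follows essentially the same route as the paper: reduce to surjectivity of the horizontal arrow, convert that via the adjunction triangle into injectivity of $H^i(X,\omega_X^{\bullet}(X_{y_0}))\to H^i(X,\omega_X^{\bullet}(X_{y_0}+A))$, and apply \autoref{thm:injectivity_Du_Bois} with $\sL=\sO_X(X_{y_0})$, $F=A$. The only point to tighten is that ``fibers Du Bois and base smooth implies $X$ Du Bois'' is not formal --- it is the Main Theorem of Kov\'acs--Schwede on deformations of Du Bois singularities, which is what the paper cites.
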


\begin{proof}
The vertical arrow of \autoref{eq:surjective_DuBois:surjection} is surjective because $X_y$ is a connected component of $A$. Therefore, it is enough to prove that the horizontal arrow of \autoref{eq:surjective_DuBois:surjection} is  surjective. However, then equivalently we may also show that
\begin{equation}
\label{eq:surjective_DuBois:injection}
H^i(X,\omega_X^{\bullet} (X_{y_0})[-1]) \to H^i(X,\omega_X^{\bullet} (X_{y_0}+A)[-1]) 
\end{equation}
is injective for all $i$. Note at this point that by \cite[Main Theorem]{Kovacs_Schwede_Du_Bois_singularities_deform}, $X$ itself is Du Bois. Hence, \autoref{eq:surjective_DuBois:injection} follows from \autoref{thm:injectivity_Du_Bois}.
\end{proof}

\begin{proof}[Proof of \autoref{cor:generic_global_generation}]
For any $y  \in Y$, 
\begin{multline}
\label{eq:surjective_DuBois:equation}
\dim_{k(y)}  \left(R^{i} f_* \omega_{X/Y}^{\bullet} \otimes k(y) \right) 
= 
\underbrace{\dim_{k(y)} \left(R^{-i} f_* \sO_X \otimes k(y) \right)}_{\textrm{\autoref{lem:locally_free}}} 
\\ =   
\underbrace{\dim_{k(y)} H^{-i}(X_y, \sO_{X_y})}_{\textrm{\cite[Theorem 7.8]{Kollar_Kovacs_Log_canonical_singularities_are_Du_Bois}}} 
=
\underbrace{\dim_{k(y)} H^{i} (X_y,\omega_{X_y}^{\bullet})}_{\textrm{Grothendieck duality}} . 
\end{multline}
Consider then the following diagram for a generic closed point $y \in Y$.
\begin{equation}
\label{eq:stable_family_global_generation:commutative}
\xymatrix{
 H^0(Y,R^i f_* (\omega_{X/Y}^{\bullet}) \otimes \omega_Y ((N+1)y_0 ) ) \ar[r]^{\beta} &
R^i f_* (\omega_{X/Y}^{\bullet}) \otimes \omega_Y ((N+1)y_0 )_y \ar[r]^-{\gamma} &  
H^{i}(X_y,\omega_{X_y}^{\bullet}  ) \\
H^{i}(X,\omega_{X/Y}^{\bullet} \otimes f^* \omega_Y ((N+1)y_0) ) \ar@{->>}[urr]_{\alpha} \ar[u]
}
\end{equation}
The arrow $\alpha$ is surjective by \autoref{lem:surjective_DuBois}, and by \autoref{eq:surjective_DuBois:equation} the two ends of $\gamma$ have the same dimensions over $\bC$. Hence $\beta$ also has to be surjective. This finishes our proof.
\end{proof}

\bibliographystyle{skalpha}
\bibliography{includeNice}
 
\end{document}